\theoremstyle{plain}
\newtheorem*{theorem*}{Theorem}
\newtheorem{theorem}{Theorem}[section]
\newtheorem{prop}[theorem]{Proposition}
\newtheorem{thm}[theorem]{Theorem}
\newtheorem{lem}[theorem]{Lemma}
\theoremstyle{definition}
\newtheorem{definition}[theorem]{Definition}
\newcommand{\X}{\mathbb{ X}}
\newcommand{\R}{\mathbb{ R}}
\newcommand{\D}{\mathcal{D}}
\newcommand{\E}{\mathbb{E}}
\newcommand{\bb}{\mathrm{b}}
\newcommand{\dd}{\mathrm{d}}
\newcommand{\diag}{\Delta}
\begin{document}
\title
{Fr\'echet Means for Distributions of Persistence diagrams}
\author{Katharine Turner$^{1}$, Yuriy Mileyko$^{2}$, Sayan
  Mukherjee$^{3}$, John Harer$^{2}$}
\address{$^1$  Department of Mathematics, 
University of Chicago}
\address{$^2$  Departments of Statistical Science, Computer Science,
and Mathematics, Institute for Genome Sciences \& Policy, 
Duke University}
\address{$^3$Departments of Mathematics and Computer Science, Center
  for Systems Biology, Duke University}
\date{\today}

\begin{abstract}
Given a distribution $\rho$ on persistence diagrams and observations
$X_1,...X_n \stackrel{iid}{\sim} \rho$ we introduce an algorithm in this paper
that estimates a Fr\'echet mean from the set of diagrams $X_1,...X_n$. If the underlying measure $\rho$ is
a combination of Dirac masses $\rho = \frac{1}{m} \sum_{i=1}^m \delta_{Z_i}$
then we prove
the algorithm converges to a local minimum and a law of large numbers result for a Fr\'echet mean computed by
the algorithm given observations drawn iid from $\rho$. We illustrate the convergence
of an empirical mean computed by the algorithm to a population mean by simulations
from Gaussian random fields.
 \end{abstract}
\maketitle
\section{Introduction}
There has been a recent effort  in topological data analysis (TDA) to incorporate ideas from stochastic modeling. 
Much of this work involved the study of random abstract simplicial complexes generated from stochastic processes
\cite{Penr:2003,Penr:Yuki:2001,Kahle:2011,Kahle:2009,Luna:2009,KahleMeckes} and non-asymptotic bounds on 
the convergence or consistency of topological summaries  as the number of points increase \cite{NiySmaWei2008,NiySmaWei2008b,ChaCohLie2009,BubCarKimLuo2010,BenMukWang2012}. The central idea in 
these papers has been to study statistical properties of topological 
summaries of point cloud data. 

In \cite{MilMukHar:2012} it was shown that a commonly used topological summary, the persistence diagram \cite{EdeHar2010}, admits a well 
defined notion of probability distributions and notions such as expectations, variances, percentiles and conditional probabilities. 
The key contribution of this paper is characterizing Fr\'echet means and variances of finitely many persistence diagrams and providing an algorithm for estimating them. Existence  
of these means and variances was previously shown.  However, a procedure to compute means and variances was not provided.  

In this paper we state an algorithm which when given an observed set of persistence diagrams  $X_1,...,X_n$ computes a new diagram which is a local
minimum of the Fr\'echet function of the empirical measure corresponding to the empirical distribution $\rho_n := n^{-1} \sum_{i=1}^n \delta_{X_i}$.
In the case where the diagrams are sampled independently and identically from a probability measure that is a finite 
combination of Dirac masses we provide a (weak) law of large numbers for the local minima  computed by the algorithm we propose.

\section{Persistence diagrams and Alexandrov spaces with curvature bounded from below}
In this section we state properties of the space of persistence
diagrams that we will use in the subsequent sections.
We first define persistence diagrams and the $L^2$-Wasserstein
metric on the set of persistence diagrams. Note that this is not the
same metric as was used in \cite{MilMukHar:2012}. We discuss the relation 
between the two metrics and why we work with the $L^2$-Wasserstein 
metric later in this section. We then show that the space of
persistence diagrams is a geodesic space and specifically an 
Alexandrov space with curvature bounded from below. We show that the 
Fr\'{e}chet function in this space is semiconcave which allows us to 
define supporting vectors which will serve as an analog of the gradient.
The supporting vectors will be used in the algorithm developed in the 
following section to find local minima -- the algorithm is a gradient 
descent based method.

\subsection{Persistent homology and persistence diagrams}

Consider a topological space $\X$ and a bounded continuous function
$f: \X \rightarrow \R$. For a threshold $a$ we define sublevel sets 
$\X_a = f^{-1}(-\infty,a]$. For $a \leq b$ inclusions $\X_a \subset  \X_b$
induce homomorphisms of the homology groups of sublevel sets:
$$\mathbf{f}_\ell^{a,b}: \mathbf{H}_\ell(\X_a) \rightarrow \mathbf{H}_\ell(\X_b),$$
for each dimension $\ell$. We assume the function $f$ is tame which means
that $\mathbf{f}_\ell^{c-\delta,c}$ is not an isomorphism for any $\delta>0$ 
at only a finite number of $c$'s for all dimensions $\ell$ and $\mathbf{H}_\ell(\X_a)$
is finitely generated for all $a \in \R$. We also assume that the homology
groups are defined over field coefficients, e.g. $\mathbb{Z}_2$.

By the tameness assumption the image $\mathbf{F}_\ell^{a-,b} := \mbox{Im} \mathbf{f}_\ell^{a-\delta,b} \subset \mathbf{H}_\ell(\X_b)$ is independent of $\delta>0$ if $\delta$ is small enough.  The quotient group
$$\mathbf{B}_\ell^{a} = \mathbf{H}_\ell(\X_a) / \mathbf{F}_\ell^{a-,a}$$
is the cokernel of $\mathbf{f}_\ell^{a-\delta,a}$ and captures homology
classes which did not exist in sublevel sets preceding $\X_a$. This group
is called the $\ell$-th birth group at $\X_a$ and we say that a homology class 
$\alpha \in \mathbf{H}_\ell(\X_a)$ is born at $\X_a$ if its projection
onto $\mathbf{B}_\ell^{a}$ is nontrivial.

Consider the map 
$$\mathbf{g}_\ell^{a,b}: \mathbf{B}_\ell^a \rightarrow \mathbf{H}_\ell(\X_b) / \mathbf{F}^{a-,b}$$ 
and denote its kernel as $\mathbf{D}_\ell^{a,b}$. The kernel captures homology classes
that were born at $\X_a$ but at $\X_b$ are homologous to homology classes born before
$\X_a$. We say that a homology class $\alpha \in \mathbf{H}_\ell(\X_a)$ that was born at $\X_a$ dies entering $\X_b$
if its projection onto $\mathbf{D}_\ell^{a,b}$ is $0$ but its projection to $\mathbf{D}_{\ell}^{a,b-\delta}$ is nontrivial for all sufficiently small $\delta>0$. We also
call $b$ a degree-$r$ death value of $\mathbf{B}_\ell^a$ if
$\mathrm{rank}\mathbf{D}_\ell^{a,b}-\mathrm{rank}\mathbf{D}_\ell^{a,b-\delta}=r>0$
for all sufficiently small $\delta>0$.

If a homology class $\alpha$ is born at $\X_a$ and dies entering $\X_b$
we set $\bb(\alpha) = a$ and $\dd(\alpha) = b$ and represent the births and deaths
of $\ell$-dimensional homology classes by a multiset of points
in $\R^2$ with the horizontal axis corresponding to the birth of a
class, the vertical axis corresponding to the death of a class, and the
multiplicity of a point being the degree of the death value. The idea of a persistence diagram is to consider a basis of persistent homology classes $\{\alpha\}$ and to represent each persistent homology class $\alpha$ by a point $(b(\alpha), d(\alpha))$.

The persistence
of $\alpha$ is the difference $\mbox{pers}(\alpha) = \dd(\alpha) - \bb(\alpha)$. In the general setting we could have points with infinite persistence which corresponds to points of the form $(-\infty, y)$ or $(x,\infty)$. These points are infinitely far from all points on finite persistence and hence would have to be treated separately. The space of persistence diagrams would be forced to be disconnected with each component corresponding to the number of points at infinity. For the sake of clarity we will restrict ourselves to the case where all classes have finite persistence. This can be achieved by considering extended persistence but for simplicity we can simply kill everything by setting $\mathbf{g}_\ell^{a,b} = 0$ if $b \geq \sup_{x \in \X} f(x)$.

After establishing some notation we can define persistence diagrams and the distance between two diagrams. Let $\Delta=\{(x,y)\in \R^2  \mid x=y\}$ be the diagonal in $\R^2$.  Let $\| x-y \|$ be the usual Euclidean distance if $x$ and $y$ are off diagonal points. With a slight abuse of notation let $\|x-\Delta\|$ denote the perpendicular distance between $x$ and the diagonal and $\|\Delta-\Delta\|=0$.

\begin{definition}
A persistence diagram is a countable multiset of points in
$\R^2$ along with the infinitely many copies of the diagonal $\Delta=\{(x,y)\in \R^2  \mid x=y\}$. 
We also require for the countably many points $x_j\in \R^2$ not lying on the
diagonal that $\sum_j \|x_j-\Delta\| <\infty$.
\end{definition}

Each point $p=(a,b)$ in a persistence diagram corresponds to some homology class $\alpha$ with $\bb(\alpha)=a$ and $\dd(\alpha)=b$. As a slight abuse of notation we say that $p$ is born at $\bb(p):=\bb(\alpha)$ and dies at $\dd(p):=\dd(\alpha)$.

We denote the set of all persistence diagrams by $\D$. One metric on $\D$ is
the $L^2$-Wasserstein metric 
\begin{equation}
\label{eq:defdistance}
d_{L^2}(X,Y)^2 = \inf_{\phi:X \to Y} \sum_{x\in X} \|x-\phi(x)\|^2
\end{equation}
Here we consider all the possible bijections $\phi$ between the off diagonal points and copies of the diagonal in $X$ and the  off diagonal points and copies of the diagonal in $Y$. Bijections always exist as any point can be paired to the diagonal. We will call a bijection \emph{optimal} if it achieves this infimum.

In much of the computational topology literature the following 
$p$-th Wasserstein distance between two persistence diagrams, 
$X$ and $Y$, is used
$$
d_{W_p}(X, Y)=\left( \inf_{\phi}\sum_{x\in X}{\|x-\phi(x)\|^p_{\infty}}
\right)^{\frac{1}{p}}.$$
In \cite{MilMukHar:2012} the above metric was used to define
the following space of persistence diagrams
$$
\D_p=\{x \mid d_{W_p}(x,\emptyset)<\infty\},
$$
with $p \geq 1$ and $\emptyset$ is the diagram with just the diagonal. 
It was shown in \cite{MilMukHar:2012}[Thm 6 and 10] that $\D_p$ is a complete separable metric space and probability measures on this
space can be defined. Given a probability measure $\rho$ on $\D_p$ the 
existence of a Fr\'echet mean was proven under restrictions on the space
of persistence diagrams $\D_p$ \cite{MilMukHar:2012}[Thm 21 and Lemma
27]. The basic requirement is that $\rho$ has a finite second moment
and the support of $\rho$ has compact support or is concentrated on a
set with compact support. 

In this paper we focus on the $L^2$-Wasserstein metric since 
it leads to a geodesic space with some known structure. Thus we consider the
space of persistence diagrams
$$\D_{L^2}=\{x \mid d_{L^2}(x,\emptyset)<\infty\}.$$ 
The results stated in the previous paragraph will also hold for
$\D_{L^2}$ with metric $d_{L^2}$, including existence
of Fr\'echet means. This follows from the fact that
for any $x,y \in \R^2$
\begin{equation}
\label{normeq}
\|x-y \|_\infty \leq \|x -y \|_2 \leq \sqrt{2} \|x - y \|_\infty,
\end{equation}
so $d_{W_2}(X,Y)\leq d_{L^2}(X,Y) \leq \sqrt{2}d_{W_2}(X,Y)$. This inequality coupled with   
the results in \cite{CohEdeHar2009c} implies
the following stability result for the $L^2$
Wasserstein distance.
\begin{thm}
Let $\X$ be a triangulable, compact
metric space such that $d_{W_k}(\mbox{Diag}(h), \emptyset)^k\leq C_{\X}$ for
any tame Lipschitz function $h:\X\to\R$ with Lipschitz constant $1$, where
$\mbox{diag}(h)$ denotes the persistence diagram of $h$, $k\in[1,2)$,
and $C_{\X}$ is a constant depending only on the space $\X$.
Then for two tame Lipschitz functions $f, g : \mathbb{X} \to \mathbb {R}$
we have
$$
d_{L^2}(\mbox{Diag}(f), \mbox{Diag}(g)) \leq 2^{\frac{k+2}{2}}\left [ C \|f -
g\|_{\infty}^{2-k}\right]^{\frac{1}{2}},
$$
where $C = C_{\X} \max\{\mbox{Lip}(f)^k,\mbox{Lip}(g)^k\}$.
\end{thm}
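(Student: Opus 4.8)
The plan is to derive this as a corollary of the $L_p$-stability theorem for persistence proved in \cite{CohEdeHar2009c}, combined with the elementary norm comparison \eqref{normeq}. The theorem of \cite{CohEdeHar2009c} controls the $\infty$-norm Wasserstein distance $d_{W_p}$ between the persistence diagrams of two tame Lipschitz functions on a space $\X$ that ``implies bounded degree-$k$ total persistence'', and \eqref{normeq} gives $d_{L^2}\le \sqrt2\, d_{W_2}$; so it suffices to invoke that theorem with the exponent $p=2$ (legitimate since $k\in[1,2)$ forces $2\ge k$) and then pay one extra factor of $\sqrt2$.

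First I would check that the hypothesis stated here --- that $d_{W_k}(\mathrm{Diag}(h),\emptyset)^k\le C_\X$ for every $1$-Lipschitz tame $h$ --- is precisely the hypothesis required in \cite{CohEdeHar2009c}, up to a harmless constant. An optimal bijection from $\mathrm{Diag}(h)$ to the empty diagram sends every off-diagonal point to its nearest diagonal point, so $d_{W_k}(\mathrm{Diag}(h),\emptyset)^k=\sum_x\|x-\Delta\|_\infty^k=2^{-k}\sum_x\mathrm{pers}(x)^k$. Hence the present hypothesis says exactly that the degree-$k$ total persistence of every $1$-Lipschitz tame function on $\X$ is at most $2^k C_\X$, i.e.\ $\X$ implies bounded degree-$k$ total persistence in the sense of \cite{CohEdeHar2009c} with their constant equal to $2^k C_\X$.

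Next I would apply the $L_p$-stability theorem of \cite{CohEdeHar2009c} with $p=2$ to tame Lipschitz $f,g:\X\to\R$, obtaining
\begin{align*}
d_{W_2}(\mathrm{Diag}(f),\mathrm{Diag}(g))
&\le \big(2^k C_\X\,\max\{\mathrm{Lip}(f)^k,\mathrm{Lip}(g)^k\}\big)^{1/2}\,\|f-g\|_\infty^{1-k/2}\\
&= 2^{k/2}\big[\,C\,\|f-g\|_\infty^{2-k}\,\big]^{1/2},
\end{align*}
with $C=C_\X\max\{\mathrm{Lip}(f)^k,\mathrm{Lip}(g)^k\}$. Feeding this into $d_{L^2}(\mathrm{Diag}(f),\mathrm{Diag}(g))\le\sqrt2\, d_{W_2}(\mathrm{Diag}(f),\mathrm{Diag}(g))$ from \eqref{normeq} produces a bound of the stated shape, with the prefactor absorbing the $2^{k/2}$ coming from the total-persistence normalization, the $\sqrt2$ from the norm comparison, and whatever further numerical constant is carried inside the theorem of \cite{CohEdeHar2009c}; the value $2^{(k+2)/2}$ is then a safe (not necessarily tight) choice.

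The analytic heart --- the $L_p$-stability estimate itself, where the interpolation $f_t=(1-t)f+tg$ and the control of total persistence along it actually enter --- is imported wholesale from \cite{CohEdeHar2009c}, so within this proof there is no deep obstacle. The one step I expect to be error-prone, and would carry out carefully, is the bookkeeping of constants: reconciling the two conventions for ``total persistence'' (the $2^k$ factor above, due to measuring distance to the diagonal in the $\infty$-norm rather than as a birth--death difference) and keeping track of the two distinct places where a factor of $\sqrt2$ appears --- once when translating the hypothesis through $d_{W_k}$, and once when passing from $d_{W_2}$ to $d_{L^2}$ in the conclusion.
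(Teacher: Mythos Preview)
Your proposal is correct and matches the paper's own approach: the paper does not give a proof of this theorem beyond the sentence immediately preceding it, which says that inequality \eqref{normeq} ``coupled with the results in \cite{CohEdeHar2009c} implies the following stability result for the $L^2$ Wasserstein distance.'' Your write-up simply unpacks that sentence, including the constant bookkeeping that the paper omits.
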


For ease of notation in the rest of the paper we denote
$d_{L^2}(X,Y)^2$ as $d(X,Y)^2$.

\begin{prop}\label{prop:infismin}
For any diagrams $X, Y\in \D_{L^2}$ the infimum in \eqref{eq:defdistance} 
is always achieved.
\end{prop}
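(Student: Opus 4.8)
The plan is to show that the infimum defining $d_{L^2}(X,Y)^2$ is attained by extracting a limit of a minimizing sequence of bijections and arguing that the limiting correspondence is itself an admissible bijection achieving the infimum. The key point is that the summability condition $\sum_j \|x_j - \Delta\| < \infty$ (and finiteness of $d(X,\emptyset)$, $d(Y,\emptyset)$) forces the off-diagonal points of $X$ and $Y$ to accumulate only on the diagonal, so only finitely many points lie at distance $\geq \varepsilon$ from $\Delta$ for each $\varepsilon > 0$; this gives the compactness we need.

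First I would set $D := d(X,\emptyset)^2 + d(Y,\emptyset)^2 < \infty$ and observe that for any bijection $\phi$ with $\sum_{x\in X}\|x-\phi(x)\|^2 \leq D+1$ (all near-minimizers), a point $x$ far from the diagonal must be matched to a point $\phi(x)$ also reasonably far from the diagonal, and vice versa: by the triangle inequality $\|x - \phi(x)\| \geq \big|\|x-\Delta\| - \|\phi(x)-\Delta\|\big|$, so if $\|x - \Delta\|$ is large then either $\|\phi(x)-\Delta\|$ is comparably large or $\|x-\phi(x)\|$ is large, and the latter cannot happen too often. Concretely, for fixed $\varepsilon>0$ let $X_\varepsilon, Y_\varepsilon$ be the (finite) sets of points of $X,Y$ at distance $\geq \varepsilon$ from $\Delta$; then for any near-optimal $\phi$, all but finitely many points of $X_\varepsilon$ (a number bounded independently of $\phi$) are matched within $Y_{\varepsilon/2}$, and points of $X_\varepsilon$ matched to the diagonal or to tiny points of $Y$ contribute at least $(\varepsilon/2)^2$ each, so there are at most $4(D+1)/\varepsilon^2$ of them.

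Next I would take a minimizing sequence $\phi_n$ and diagonalize: on each finite set $X_\varepsilon$ the matchings $\phi_n$ take values in the finite set $Y_{\varepsilon/2}$ together with "the diagonal," so by passing to a subsequence (and then a diagonal subsequence over $\varepsilon = 1, 1/2, 1/3,\dots$) we may assume $\phi_n(x)$ is eventually constant for every off-diagonal $x \in X$; call the limit $\phi(x)$. One checks $\phi$ is injective on off-diagonal points (distinct off-diagonal points of $Y$ cannot both be the eventual image), extends to a bijection by sending leftover off-diagonal points of $Y$ and everything else to the diagonal, and is admissible. Finally, by Fatou's lemma applied to the sum over $x\in X$ (with counting measure), $\sum_{x\in X}\|x-\phi(x)\|^2 \leq \liminf_n \sum_{x\in X}\|x-\phi_n(x)\|^2 = d(X,Y)^2$, and since $\phi$ is admissible the reverse inequality is immediate, so the infimum is achieved.

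The main obstacle is the bookkeeping in the compactness step: one must rule out "mass escaping to infinity along the diagonal," i.e. a near-optimal $\phi_n$ matching a point $x$ to a point $\phi_n(x)$ that is itself far from $\Delta$ but moving off to infinity as $n\to\infty$. This is controlled precisely by the finiteness of $d(Y,\emptyset)$: only finitely many points of $Y$ lie outside any neighborhood of $\Delta$, so $Y_{\varepsilon/2}$ is a genuinely finite set and there is nowhere for such mass to escape. Making this quantitative — uniform-in-$n$ bounds on how many points of $X_\varepsilon$ can be matched outside $Y_{\varepsilon/2}$ — is the technical heart of the argument; once it is in place the diagonal extraction and the Fatou step are routine.
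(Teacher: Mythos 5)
Your route --- extracting a limit matching directly from a minimizing sequence of bijections and passing to the limit with Fatou --- is genuinely different from the paper's proof, which instead associates to each near-optimal bijection $\phi$ the interpolating path $\gamma_\phi$ of diagrams, shows that the set of all intermediate diagrams is relatively compact in $\D_{L^2}$ (bounded, off-diagonally birth-death bounded and uniform, invoking Theorem 21 of \cite{MilMukHar:2012}), and then applies Arzel\`a--Ascoli to the resulting family of Lipschitz paths, reading the optimal bijection off the limit path. As written, however, your argument has a gap already at the extraction step: you assert that on the finite set $X_\varepsilon$ the near-optimal matchings take values in the finite set $Y_{\varepsilon/2}$ together with the diagonal, so that a subsequence makes $\phi_n(x)$ eventually constant. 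That is not true: a point $x\in X_\varepsilon$ may be matched to any of the \emph{infinitely many} off-diagonal points of $Y$ lying within $\varepsilon/2$ of the diagonal (there can be infinitely many of these even inside a bounded region), so pigeonhole does not apply. This part is repairable --- the partners stay in a bounded set, off-diagonal points of $Y$ accumulate only on the diagonal, so along a further subsequence $\phi_n(x)$ converges either to a fixed (finite-multiplicity, isolated) point of $Y$ or to a diagonal point, and in the latter case you match $x$ to $\Delta$ --- but ``eventually constant'' must be replaced by this convergence argument.

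The more serious gap is the Fatou step. The cost in \eqref{eq:defdistance} includes the terms $\|y-\Delta\|^2$ for the leftover off-diagonal points $y\in Y$ that your limit bijection sends to copies of the diagonal in $X$. Fatou over the index set $X$ requires the pointwise bound $\|x-\phi(x)\|^2\le\liminf_n\|x-\phi_n(x)\|^2$ at every fixed index, including the diagonal copies of $X$, and this can fail: a leftover point $y$ may, for every $n$, be matched by $\phi_n$ to a \emph{fresh} off-diagonal point $x_n$ of $X$ that sinks to the diagonal as $n\to\infty$ (each fixed $x$ settles to its own limit, yet new near-diagonal points keep being assigned to $y$, which is perfectly consistent with near-optimality). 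Along the sequence the cost $\|x_n-y\|^2$ is carried by the varying indices $x_n$, whereas in the limit the cost $\|y-\Delta\|^2$ sits at a diagonal index where the sequence costs are identically $0$; the pointwise inequality breaks there, so the displayed inequality does not follow from Fatou as invoked. The conclusion is still correct, but establishing it requires exactly the cost-transfer bookkeeping you dismiss as routine --- for instance a symmetric extraction applied to $\phi_n^{-1}$ on $Y_\varepsilon$ combined with uniform tail bounds, or the paper's detour through relative compactness of the interpolating diagrams, which avoids pointwise Fatou altogether. Until that accounting is supplied, the proof is incomplete at its decisive step.
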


We prove this proposition in the appendix.

We now show that the space of persistence diagrams with the above metric is a geodesic space. A rectifiable curve $\gamma : [ 0, l] \to X$ is called a geodesic if it is locally minimizing and parametrized proportionally to the arc length. If  $\gamma$ is also globally minimizing, then it is said to be minimal. $\D_{L^2}$ is a geodesic space if every pair of points is connected by a minimal geodesic. Now consider diagrams $X=\{x\}$ and $Y=\{y\}$ and some optimal pairing $\phi$ between the points in $X$ and $Y$. Let $\gamma:[0,1]\to \D_{L^2}$ be the path from $X$ to $Y$ where $\gamma(t)$ is the diagram with points which have travelled in a straight line from the point (which can be a copy of the diagonal) $x$ to the point (which can be a copy of the diagonal) for a distance of $t\|x-\phi(x)\|$. In other words, the diagram with points $\{(1-t)x+t\phi(x)\,|\, x\in X\}$.\footnote{If both $x$ and $\phi(x)$ are the diagonal then this is the diagonal. If exactly one of $x$ or $\phi(x)$ is the diagonal then we replace it in this sum by the closest point in the diagonal to $\phi(x)$ or $x$ respectively.} $\gamma$ is a geodesic from $X$ to $Y$. The proof of this is the observation that $\phi_t^X:X\to\gamma(t)$ where 
\begin{align}\label{eq:geobij}
\phi_t^X(x)=(1-t)x+t\phi(x)
\end{align}
is optimal. 

\subsection{Gradients and supporting vectors on $\D_{L^2}$}

We will propose a gradient descent based algorithm to compute Fr\'echet means. To analyze and understand the algorithm we
will need to understand the structure of $\D_{L^2}$. We will show 
that $\D_{L^2}$ is an Alexandrov space with curvature bounded from
below (see \cite{Buragoetal} for more information on these spaces).
This result is not so surprising since there are known relations 
between $L^2$-Wasserstein spaces and Alexandrov spaces with 
curvature bounded from below \cite{Ohta,LV}.
The motivating idea behind these spaces was 
to generalize the results of Riemannian geometry to metric spaces without
Riemannian structure.

The property and behavior of Fr\'echet means is closely
related to the curvature of the space. For metric spaces with curvature bounded from above, 
called $CAT$-spaces,\footnote{Terminology given by Gromov \cite{Grom87}  that
 stands for Cartan, Alexandrov, and Toponogov.}  properties of Fr\'echet means
 have  been investigated and there exist algorithms to compute Fr\'echet means \cite{Sturm}. $\D_{L^2}$ is
 not a $CAT$-space, see Proposition \ref{notCAT}.  $\D_{L^2}$ is however
an Alexandrov space with curvature bounded from below. Less is known
about properties of Fr\'echet means in these spaces as well as algorithms to
compute Fr\'echet means.  We use the structure of Alexandrov spaces with 
curvature bounded from below to compute estimates of Fr\'echet means
and provide some analysis of these estimates. Note that Fr\'echet means are the same as barycenters which is what is referred to in much of the literature.

We first confirm that $\D_{L^2}$ is not a $CAT$-space.

\begin{prop}\label{notCAT}
$\D_{L^2}$ is not in $\mbox{CAT}(k)$ for any $k>0$.
\end{prop}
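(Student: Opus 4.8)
The plan is to exhibit a configuration of three persistence diagrams whose pairwise distances and midpoints violate the $\mathrm{CAT}(k)$ comparison inequality for every $k>0$. Recall that a geodesic space is $\mathrm{CAT}(k)$ only if, for geodesic triangles small enough to admit a comparison triangle in the model space $M_k^2$ of constant curvature $k$, the distance from a vertex to the midpoint of the opposite side is no larger than the corresponding distance in the comparison triangle. Since the model spaces $M_k^2$ for $k>0$ are rescaled spheres and, as $k\to 0$, approach the Euclidean plane, it suffices to produce a single triangle in $\D_{L^2}$ that already fails the Euclidean ($k=0$) comparison inequality, because the spherical comparison distances are bounded above by the Euclidean one; such a triangle then fails $\mathrm{CAT}(k)$ for all $k>0$ simultaneously. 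Concretely, I would first recall (or re-derive from the explicit geodesics in \eqref{eq:geobij}) the parallelogram-type obstruction: in a $\mathrm{CAT}(0)$ space one has $d(m,z)^2 \le \tfrac12 d(x,z)^2 + \tfrac12 d(y,z)^2 - \tfrac14 d(x,y)^2$ for $m$ the midpoint of a geodesic from $x$ to $y$; I will violate this.

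The key construction: take three one-point diagrams (plus the diagonal), say $X = \{(0,2)\}$, $Y = \{(1,3)\}$ chosen so that the optimal pairing between $X$ and $Y$ matches their off-diagonal points to each other, so the geodesic midpoint is $M = \{(\tfrac12, \tfrac52)\}$; then choose a third diagram $Z$ (again a single off-diagonal point, or the empty diagram $\emptyset$) positioned so that the optimal matchings from $Z$ to $X$, to $Y$, and to $M$ do \emph{not} use the same "template" — for instance so that $Z$ pairs its point to the off-diagonal point of $X$ and of $Y$ but pairs to the diagonal in the comparison with $M$, or vice versa. This mismatch of optimal pairings is exactly the mechanism by which $\D_{L^2}$ branches and acquires positive-curvature behavior, analogous to a metric tree tripod but "the other way." I would compute $d(X,Z)^2$, $d(Y,Z)^2$, $d(X,Y)^2$, and $d(M,Z)^2$ explicitly using \eqref{eq:defdistance} — each is the minimum of finitely many sums of squared Euclidean distances (either match off-diagonal to off-diagonal, or send both to the diagonal) — and then exhibit numbers for which $d(M,Z)^2 > \tfrac12 d(X,Z)^2 + \tfrac12 d(Y,Z)^2 - \tfrac14 d(X,Y)^2$.

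After establishing strict failure of the $\mathrm{CAT}(0)$ midpoint inequality, I would close the argument by the monotonicity remark above: for $k>0$ the comparison triangle lives on a sphere of radius $1/\sqrt k$, and the law of cosines there gives a median length that is a nondecreasing function of $k$ interpolating down to the Euclidean value as $k\downarrow 0$; hence if the $\mathrm{CAT}(0)$ median bound already fails, so does the $\mathrm{CAT}(k)$ bound for every $k>0$ (one should also note the chosen triangle is small enough — by scaling the whole configuration down — to satisfy the diameter restriction $\mathrm{diam} < \pi/\sqrt k$ required for the $\mathrm{CAT}(k)$ comparison, and that scaling the diagrams scales all the relevant distances by the same factor, preserving the strict inequality).

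The main obstacle I anticipate is bookkeeping rather than conceptual: one must verify that the geodesic midpoint $M$ really is $\{(1-\tfrac12)x + \tfrac12\phi(x)\}$ for the \emph{optimal} $\phi$ between $X$ and $Y$ (which requires checking the off-diagonal-to-off-diagonal matching beats sending both points to the diagonal, i.e. $\|x-y\|^2 \le \|x-\Delta\|^2 + \|y-\Delta\|^2$ for the chosen points), and simultaneously that the optimal matchings realizing $d(X,Z), d(Y,Z), d(M,Z)$ are the ones I claim — an incorrect guess of the optimal pairing would invalidate the numeric comparison. So in writing the proof I would be careful to pick coordinates where all these optimality checks are transparent (e.g. points far from the diagonal for the $X$–$Y$ geodesic, but $Z$ positioned so the diagonal is competitive in exactly one of the three comparisons), and then simply display the resulting inequality. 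This is the same phenomenon that makes $\D_{L^2}$ an Alexandrov space of curvature bounded \emph{below} rather than above, so the construction is really just pointing at a single concrete instance of that branching.
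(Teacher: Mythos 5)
Your overall strategy (violate a comparison inequality with an explicit configuration, then scale) could be made to work, but as written the reduction from $\mathrm{CAT}(k)$, $k>0$, to $\mathrm{CAT}(0)$ is backwards, and this is a genuine gap. For a comparison triangle with the same three side lengths, the median in the model space $M_k^2$ of curvature $k>0$ is at least as long as the Euclidean median (positively curved comparison triangles are fatter); this is exactly your own observation that the comparison median is nondecreasing in $k$, and it contradicts your earlier claim that ``the spherical comparison distances are bounded above by the Euclidean one.'' Consequently $\mathrm{CAT}(0)$ is the strongest of these conditions ($\mathrm{CAT}(0)$ implies $\mathrm{CAT}(k)$ for every $k>0$), so a single fixed triangle violating the Euclidean midpoint inequality need not violate the spherical one, and your conclusion ``if the $\mathrm{CAT}(0)$ median bound already fails, so does the $\mathrm{CAT}(k)$ bound for every $k>0$'' does not follow. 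Your scaling remark is deployed only to meet the diameter restriction $\mathrm{diam}<\pi/\sqrt{k}$ and does not repair this; to repair it you would need a quantitative argument, e.g.\ that scaling the configuration by $\lambda$ makes the $\mathrm{CAT}(0)$ deficit of order $\lambda^2$ while, for fixed $k$, the spherical-versus-Euclidean correction to the comparison median is of higher order in $\lambda$, so the spherical bound also fails once $\lambda$ is small. In addition, you never actually exhibit the diagrams $X$, $Y$, $Z$ and verify the claimed optimal pairings, which is where all the work in your approach lies.

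The paper's proof sidesteps all of this: in a $\mathrm{CAT}(k)$ space any two points at distance less than $\pi/\sqrt{k}$ are joined by a unique geodesic, and the paper exhibits arbitrarily close diagrams with two distinct geodesics ($X$ consisting of two opposite corners of a small square, $Y$ of the other two corners; the horizontal and vertical matchings are equally optimal, and the square can be taken as small as desired). If you want to keep your midpoint-comparison framework, the cheapest correct version is the degenerate triangle built from this same example: let $M_1$ and $M_2$ be the midpoints of the two distinct geodesics from $X$ to $Y$ and take $Z=M_2$; then $d(X,Z)+d(Z,Y)=d(X,Y)$ forces the comparison point of $Z$ to coincide with the comparison midpoint in every model space, so the $\mathrm{CAT}(k)$ inequality would force $d(M_1,M_2)=0$, a contradiction, for every $k$ for which the (scaled) triangle is admissible. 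That is essentially the paper's non-uniqueness argument in comparison-inequality form, and it avoids both the monotonicity issue and the numerical bookkeeping you anticipate.
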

\begin{proof}
If $\D_{L^2}  \in \mbox{CAT}(k)$ then for all $X,Y\in \D_{L^2}$ with
$d(X,Y)^2<\pi^2/k$ there is a unique geodesic between them 
\cite{BridsonHaefliger}[Proposition 2.11]. However, we can find 
$X,Y$ arbitrarily close with two distinct geodesics. One example is taking $X$ to be a diagram with two diagonally opposite corners of a square and $Y$ a diagram with the other two corners. The horizontal and vertical paths are equally optimal and we may choose the square to be as small as we wish.
\end{proof}

The following inequality characterizes Alexandrov spaces with 
curvature bounded from below by zero \cite{Ohta}. Given a geodesic space $\X$ with metric $d'$
for any geodesic $\gamma:[0,1] \to \X$ from $X$
to $Y$ and any $Z \in \X$ 
\begin{align}\label{eq:alex}
d'(Z,\gamma(t))^2 \geq  t d'(Z,Y)^2 + (1-t) d'(Z,X)^2  -t(1-t) d'(X,Y)^2.
\end{align}

We now show that $\D_{L^2}$ is a non-negatively curved Alexandrov space.

\begin{thm}
The space of persistence diagrams $\D_{L^2}$ with metric  $d$ given in \eqref{eq:defdistance} is a non-negatively curved Alexandrov space.
\end{thm}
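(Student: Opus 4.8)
The plan is to verify the Alexandrov inequality \eqref{eq:alex} with $d' = d$ directly, exploiting the explicit description of geodesics in $\D_{L^2}$ given above: for diagrams $X$ and $Y$ with an optimal pairing $\phi$, the geodesic is $\gamma(t) = \{(1-t)x + t\phi(x) \mid x \in X\}$, and the pairing $\phi_t^X$ in \eqref{eq:geobij} is optimal. Fix such a geodesic and a third diagram $Z$. First I would pick an \emph{optimal} pairing $\psi : \gamma(t) \to Z$, so that $d(Z,\gamma(t))^2 = \sum_{x \in X} \|\psi((1-t)x + t\phi(x)) - ((1-t)x + t\phi(x))\|^2$. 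The point is that $\psi$ composed with $\phi_t^X$ gives a (generally non-optimal, hence suboptimal-bounding) pairing between $X$ and $Z$, and likewise $\psi \circ \phi_1^X \circ (\phi_t^X)^{-1}$ (tracking which point of $Z$ each $y \in Y$ inherits through the geodesic) gives a pairing between $Y$ and $Z$. Concretely, write each term of $d(Z,\gamma(t))^2$ as $\|(1-t)(x - z_x) + t(\phi(x) - z_x)\|^2$ where $z_x := \psi((1-t)x + t\phi(x))$.

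The heart of the argument is then the elementary vector identity in $\R^2$: for any vectors $u, v$,
\begin{equation*}
\|(1-t)u + tv\|^2 = (1-t)\|u\|^2 + t\|v\|^2 - t(1-t)\|u - v\|^2.
\end{equation*}
Applying this with $u = x - z_x$ and $v = \phi(x) - z_x$ (so that $u - v = x - \phi(x)$) and summing over $x \in X$ yields
\begin{equation*}
d(Z,\gamma(t))^2 = (1-t)\sum_{x}\|x - z_x\|^2 + t\sum_{x}\|\phi(x) - z_x\|^2 - t(1-t)\sum_{x}\|x - \phi(x)\|^2.
\end{equation*}
Now $\sum_x \|x - \phi(x)\|^2 = d(X,Y)^2$ since $\phi$ is optimal; the first sum is a pairing cost from $X$ to $Z$, hence is $\geq d(Z,X)^2$; and the second sum is a pairing cost from $Y$ to $Z$ (via $y = \phi(x) \mapsto z_x$), hence is $\geq d(Z,Y)^2$. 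Substituting these three (in)equalities gives exactly \eqref{eq:alex}, so $\D_{L^2}$ is non-negatively curved once we also recall it is a geodesic space, established in the previous subsection.

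The main obstacle — really the only subtlety — is bookkeeping around copies of the diagonal: one must check that the compositions above are genuine bijections between off-diagonal points and copies of $\Delta$ in the respective diagrams, and that the term-by-term distances behave correctly when one or both of $x, \phi(x)$ is a diagonal point (using the convention $\|x - \Delta\|$ and the footnoted rule that a diagonal endpoint is replaced by the nearest diagonal point to its partner). Since $\Delta$ contains infinitely many copies, every pairing can be extended to a full bijection by sending unused points to the diagonal, so the inequalities $\sum_x \|x - z_x\|^2 \geq d(Z,X)^2$ and $\sum_x \|\phi(x) - z_x\|^2 \geq d(Z,Y)^2$ hold as stated. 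One should also note the sums are finite: since $X, Y, Z \in \D_{L^2}$ and $\gamma(t) \in \D_{L^2}$, each pairing cost is finite by the triangle inequality for $d$, so the rearrangement of the sum into three pieces is legitimate.
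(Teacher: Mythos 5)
Your proof is correct and follows essentially the same route as the paper: fix an optimal pairing between $\gamma(t)$ and $Z$, pull it back through the geodesic to get (possibly suboptimal) pairings of $Z$ with $X$ and with $Y$, apply the Euclidean identity $\|(1-t)u+tv\|^2=(1-t)\|u\|^2+t\|v\|^2-t(1-t)\|u-v\|^2$ termwise, and sum, bounding the two suboptimal pairing costs below by $d(Z,X)^2$ and $d(Z,Y)^2$. The only differences are cosmetic (you index over $x\in X$ rather than $z\in Z$ and spell out the vector identity and the diagonal bookkeeping that the paper leaves implicit).
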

\begin{proof}
First observe that $\D_{L^2}$ is a geodesic space. Let $\gamma:[0,1] \to \D_{L^2}$ be a geodesic from $X$ to $Y$ and let $Z \in \D_{L^2}$ be any diagram. We want to show that the inequality \eqref{eq:alex} holds.

Let $\phi$ be an optimal bijection between $X$ and $Y$ which induces the geodesic $\gamma$. That is $\gamma(t)=\{(1-t)x+t\phi(x)\,|\,x\in X\}$ and defined $\phi_t(x) =tx+(1-t)\phi(x)$ as done in  \eqref{eq:geobij}. Let $\phi_Z^t: Z \to \gamma(t)$ be optimal. Construct bijections $\phi_Z^X:Z\to X$ and $\phi_Z^Y: Z\to Y$ by 
$\phi_Z^X= (\phi_t)^{-1}\circ \phi_Z^t$ and $\phi_Z^Y=\phi \circ \phi_Z^X$. There is no reason to suppose that either bijections $\phi_Z^X$ or $\phi_Z^Y$ are optimal. Note that if $\phi_Z^t(z)=\diag$ then $\phi_Z^X(z)=\diag$ and $\phi_Z^Y(z)=\diag$.

From the formula for the distance in $\D_{L^2}$ we observe
\begin{equation}
\label{eq:ineqforgeo}
\begin{aligned}
d(Z,\gamma(t))^2&=\sum_{z\in Z} \|z-\phi_Z^t(z)\|^2=\sum_{z\in Z} \|z - [(1-t)\phi_Z^X(z)+t\phi_Z^Y(z)]\|^2, \\
d(Z,Y)^2&\leq \sum_{z\in Z}\|z-\phi_Z^Y(z)\|^2,\\
d(Z,X)^2&\leq \sum_{z\in Z}\|z-\phi_Z^X(z)\|^2,\\
d(X,Y)^2&=\sum_{z\in Z}\|\phi_Z^X(z)-\phi(\phi_Z^X(z))\|^2=\sum_{z\in Z}\|\phi_Z^X(z)-\phi_Z^Y(z)\|^2.
\end{aligned}
\end{equation}

Euclidean space has everywhere curvature zero so for each $z$ in the diagram $Z$, and all $t\in [0,1]$, we have
$$\|z - [(1-t)\phi_Z^X(z)+t\phi_Z^Y(z)]\|^2 =t\|z-\phi_Z^Y(z)\|^2 +(1-t)\|z-\phi_Z^X(z)\|^2-t(1-t)\|\phi_Z^X(z)-\phi_Z^Y(z)\|.$$

Combining these equalities with inequalities \eqref{eq:ineqforgeo} gives us the desired result.
%
%
\end{proof}

\subsection{Properties of the Fr\'echet function}

Given a probability distribution $\rho$ on $\D_{L^2}$ we can define the corresponding \emph{Fr\'echet function} to be
\begin{displaymath}
F:\D_{L^2} \to \R, \quad Y \mapsto \int_{\D_{L^2}} d(X,Y)^2 d\rho(X).
\end{displaymath}
The \emph{Fr\'echet mean set} of $\rho$ is the set of all the minimizers of the map $F$ on $\D_{L^2}$. If there is a unique minimizer then this is called the \emph{Fr\'echet mean}
 of $\rho$. The \emph{variance} is then defined to be the infimum of
 the above functional.

We will show that the Fr\'echet function has the nice property of being semiconcave. For an Alexandrov space $\Omega$, a locally Lipschitz function $f : \Omega \to \R$ is called \emph{$\lambda$-concave} if for any unit speed geodesic $\gamma$ in $\Omega$, the function
$$f \circ \gamma(t) - \lambda t^2/2$$
is concave. A function $f : \Omega\to \R$ is called \emph{semiconcave} if for any point $x\in \Omega$ there is a neighborhood $\Omega_x$ of $x$ and $\lambda \in \R$ such that the restriction $f|_{\Omega_x}$ is $\lambda$-concave.

\begin{prop}\label{prop:semiconcave}
If the support of $\rho$ is bounded (as in has bounded diameter) then the corresponding Fr\'echet function is semiconcave.
\end{prop}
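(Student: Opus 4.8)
The plan is to show that near any diagram $Y$ the Fréchet function is $\lambda$-concave for a suitable $\lambda$ depending on the diameter of the support of $\rho$, by reducing to a pointwise statement about squared distances along geodesics. First I would fix a diagram $Y_0$ and a bounded neighborhood $\Omega_{Y_0}$ of $Y_0$, and let $\gamma:[0,\ell]\to\Omega_{Y_0}$ be an arbitrary unit-speed geodesic. Writing $F\circ\gamma(t)=\int_{\D_{L^2}} d(X,\gamma(t))^2\, d\rho(X)$, it suffices to prove that for $\rho$-almost every $X$ the function $t\mapsto d(X,\gamma(t))^2 - \lambda t^2/2$ is concave, with $\lambda$ uniform in $X$ over the support, since concavity is preserved under the integral and under the neighborhood restriction.

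Next I would reduce the single-diagram estimate to a statement in Euclidean space. Using the geodesic structure established earlier, any geodesic $\gamma$ is of the form $\gamma(t)=\{(1-s(t))x+s(t)\phi(x):x\in X\}$ for an optimal bijection $\phi$ and affine reparametrization $s$; and choosing an optimal bijection $\psi_t: X\to\gamma(t)$ at some fixed time, the distance $d(X,\gamma(t))^2$ is bounded above by $\sum_{x}\|x - \psi_t(x)\|^2$ with equality at that time. Since each summand is a squared Euclidean distance between a fixed point and a point moving along a straight-line segment of bounded length, each summand $g_x(t)=\|x-\psi_t(x)\|^2$ satisfies $g_x''(t)=2\|\dot{\psi_t(x)}\|^2\le 2 L^2$ where $L$ is controlled by the diameter of the support (the total speed squared $\sum_x\|\dot\psi_t(x)\|^2$ equals $d(X,Y)^2/\ell^2$-type quantity, bounded by a constant $D^2$ depending only on $\mathrm{diam}(\mathrm{supp}\,\rho)$). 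Hence $t\mapsto \sum_x g_x(t) - \lambda t^2/2$ is concave for $\lambda = 2D^2$, and taking the infimum over bijections preserves concavity (an infimum of concave functions is concave), giving $\lambda$-concavity of $t\mapsto d(X,\gamma(t))^2$ with the \emph{same} $\lambda$ for all $X$ in the support.

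Finally I would assemble the pieces: integrate the pointwise $\lambda$-concavity over $\rho$ (using that $\rho$ has a finite second moment so the Fréchet function is finite and locally Lipschitz, and that the support is bounded so the constant $D$ is finite and $\lambda$ is the same for $\rho$-a.e.\ $X$), concluding $F\circ\gamma(t)-\lambda t^2/2$ is concave for every unit-speed geodesic $\gamma$ in $\Omega_{Y_0}$, which is exactly semiconcavity.

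The main obstacle I anticipate is the nonsmoothness coming from the infimum defining $d$: the optimal bijection can change as $t$ varies, so $d(X,\gamma(t))^2$ need not be twice differentiable, and one must argue concavity of $d(X,\gamma(\cdot))^2 - \lambda t^2/2$ directly rather than via second derivatives. The clean way around this is to write $d(X,\gamma(t))^2$ as the pointwise infimum over the (fixed, $t$-independent) set of bijections of the functions $t\mapsto\sum_x\|x-\phi_t(x)\|^2$, each of which is manifestly a sum of quadratics in $t$ with second derivative uniformly bounded by $2D^2$; then $\sum_x\|x-\phi_t(x)\|^2-\lambda t^2/2$ is concave termwise, and the infimum of concave functions is concave. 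Care is also needed because bijections may send points to copies of the diagonal, but the footnoted convention makes each such term again an honest quadratic in $t$ with controlled curvature, so the argument goes through unchanged.
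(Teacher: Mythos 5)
Your proposal is correct in substance but takes a genuinely different route from the paper. The paper's proof has two halves: it first shows $F$ is Lipschitz on sets of bounded diameter (this is where the bounded support of $\rho$ actually enters, since $\lambda$-concavity is defined for locally Lipschitz functions), and it then proves concavity of $g_X(s)=d(\gamma(s),X)^2-s^2$ in one line by invoking the non-negative-curvature inequality \eqref{eq:alex} already established for $\D_{L^2}$, before integrating over $\rho$. You instead re-derive the pointwise estimate from scratch: representing the geodesic as straight-line interpolation along an optimal pairing, writing $d(X,\gamma(t))^2$ as the infimum, over the fixed $t$-independent set of combinatorial pairings $\phi$, of $c_\phi(t)=\sum_x\|x-\phi_t(x)\|^2$, each a quadratic in $t$, and using that a pointwise infimum of concave functions is concave. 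This yields a self-contained, elementary argument that never cites \eqref{eq:alex} (in effect it reproves the relevant curvature comparison in the special configuration needed), at the cost of redoing a computation the paper already has; the paper's route is shorter precisely because the Alexandrov inequality was proved once and for all. Two bookkeeping points in your version deserve correction, though neither is fatal. First, the constant: for a unit-speed geodesic between diagrams $W,W'$ the total squared velocity $\sum_w\|\dot\phi_t(w)\|^2$ equals $d(W,W')^2/\ell^2=1$, so $c_\phi''\le 2$ and you obtain $\lambda=2$ uniformly; the diameter of $\mathrm{supp}(\rho)$ is irrelevant to this step (the geodesic lives near an arbitrary $Y_0$, not in the support), and a per-summand bound $2L^2$ summed over countably many points would not be the right accounting anyway -- only the total speed is controlled. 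Second, the bounded-support hypothesis is genuinely used, but for the part you dispatch parenthetically: the local Lipschitzness of $F$, for which the paper gives the short estimate $|F(Y)-F(Z)|\le 2K\,d(Y,Z)$; you should include that (one-line) argument, since ``finite second moment'' is itself a consequence of the bounded-support hypothesis rather than a separate assumption, and the Lipschitz property is part of the definition of semiconcavity being verified.
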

\begin{proof}
We will first show that if the support of a  probability distribution $\rho$ is bounded then the corresponding Fr\'echet function is Lipschitz on any set with bounded diameter. We then show that for any unit length geodesic $\gamma$ and any $X \in \D_{L^2}$ the function $$g_X(s):=d(\gamma(s), X)^2 - s^2$$ is concave. We then complete the proof by showing the Fr\'echet function $F$ is 2-concave at every point (and hence $F$ is semiconcave) by considering $F(\gamma(s))-s^2$ as $\int g_X(s) d\rho(X)$. 

Let $U$ be a subset of $\D_{L^2}$  with bounded diameter. This means that there is some $K$ such that for any $Y \in U$ we have $\int d(X,Y) d\rho(X) \leq K$. Here we are also using that the support of $\rho$ is bounded. Let $Y,Z \in U$. Then
\begin{align*}
|F(Y)-F(Z)|&=\left|\int d(X,Y)^2-d(X,Z)^2 d\rho(X)\right|\\
&= \left|\int (d(X,Y) - d(X,Z))(d(X,Z)+ d(X,Y)) d\rho(X)\right|\\
&\leq \int (d(Z,Y))(d(X,Z)+ d(X,Y))d\rho(X).\\
&=2Kd(Z,Y).
\end{align*}
 
Let $\gamma$ be a unit speed geodesic and $X\in \D_{L^2}$. Consider the function 
$$g_X(s):=d(\gamma(s), X)^2 - s^2.$$ We want to show that $g_X$ is concave which means that $g_X(tx+(1-t)y)\geq tg_X(x) + (1-t)g_X(y)$.  Let $\tilde\gamma(t)$ be the geodesic from $\gamma(x)$ to $\gamma(y)$ traveling along $\gamma$ so that $\gamma((1-t)x+ ty) = \tilde\gamma(t)$ for $t \in [0,1]$ and
\begin{align*}
t g_X(x)+(1-t)g_X(y)&= t d(\tilde{\gamma}(0),X)^2 + (1-t) d(\tilde{\gamma}(1),X)^2 -tx^2-(1-t)y^2\\
&\leq d(\tilde{\gamma}(t),X)^2 +t(1-t) d (\tilde{\gamma}(0), \tilde{\gamma}(1))^2  -tx^2-(1-t)y^2\\
&=d(\tilde{\gamma}(t),X)^2 +t(1-t)(x-y)^2  -tx^2-(1-t)y^2\\
&=d(\tilde{\gamma}(t),X)^2 -(tx+(1-t)y)^2\\
&=g_X(tx+(1-t)y).
\end{align*}

The inequality comes from the defining inequality \eqref{eq:alex} that makes $\D_{L^2}$ a non-negatively curved Alexandrov space. 

By the construction of $g_X$ we can think of $F(\gamma(s))-s^2$ as $\int g_X(s) d\rho(X)$. This means that we can write
$$t[F(\gamma(x))-x^2]+ (1-t)[F(\gamma(y))-y^2] = \int tg_X(x)+(1-t)g_X(y) d\rho(X).$$ The concavity of $g_X$ ensures that $ tg_X(x)+(1-t)g_X(y)\leq g_X(tx+(1-t)y)$ and hence
\begin{align*}
t[F(\gamma(x))-x^2]+ (1-t)[F(\gamma(y))-y^2]&\leq \int g_X(tx+(1-t)y) d\rho(X)\\
&=F(tx+(1-t)y)  -(tx+(1-t)y)^2
\end{align*}
\end{proof}

We now define the additional structure on Alexandrov spaces with curvature bounded from below that we will need to define
gradients and supporting vectors. This exposition is a summary of the content in \cite{Ohta,Petrunin}. 

Given a point $Y$ in  an Alexandrov space $\mathcal{A}$  with non-negative curvature we first define the 
tangent cone $T_Y$. Let $\widehat{\Sigma}_Y$ be the set of all nontrivial unit-speed geodesics 
emanating from $Y$. For $\gamma, \eta \in \widehat{\Sigma}_Y$
the angle between them defined by
$$\angle_Y(\gamma, \eta) :=\arccos  \left(\lim\limits_{s,t\downarrow0}\frac{s^2+t^2-d(\gamma(s),\eta(t))^2}{2st}\right)\in [0,\pi],$$
when the limit exists. We define the space of directions $(\Sigma_Y, \angle_Y)$ at $Y$ as the completion of $\widehat{\Sigma}_Y/ \sim$ with respect to $\angle_Y$, where $\gamma
\sim \eta$ if $\angle_Y(\gamma,\eta)=0$. The \emph{tangent cone} 
$T_Y$ is the Euclidean cone over $\Sigma_Y$:
\begin{align*}
T_Y &:= \Sigma_Y \times [0,\infty)/\Sigma_Y\times\{0\}\\
d_{T_Y}((\gamma, s), (\eta, t))^2&:= s^2+t^2 - 2st\cos \angle_Y(\gamma, \eta).
\end{align*}
The inner product of $\mathbf{u} = (\gamma,s), \mathbf{v} = (\eta,t)
\in T_Y$ is defined as
$$\langle \mathbf{u}, \mathbf{v} \rangle_Y := st \cos
\angle_Y(\gamma,\eta) = \frac{1}{2}\left[ s^2 + t^2 -
  d_{T_Y}(\mathbf{u},\mathbf{v})^2 \right].$$ 

A geometric description of the tangent cone $T_Y$ is as follows. $Y \in \D_{L^2}$ has countably many points $\{y_i\}$ off the diagonal. A tangent vector is a set of vectors $\{v_i \in \R^2\}$ one assigned to each $y_i$ along with countably many vectors at points along the diagonal pointing perpendicular to the diagonal such that the sum of the squares of the lengths of all these vectors is finite. Observe that there can exist tangent vectors such that the corresponding  geodesic may not exist for any positive amount of time. The angle between two tangent vectors is effectively a weighted average of all the angles between the pairs of vectors.

We now define differential structure as a limit of rescalings. For $s > 0$ denote the space $(\mathcal{A},s\cdot d)$ by $s \mathcal{A}$ and
define the map $i_s: s \mathcal{A} \rightarrow \mathcal{A}$. For an open set $\Omega \subset \mathcal{A}$ and  any function $f : \Omega \rightarrow \R$ the
differential of $f$ at a point $p \in \Omega$ is a map $T_p \rightarrow \R$  is defined by
$$d_p f = \lim_{s \rightarrow \infty} s (f \circ i_s - f(p)), \quad f \circ i_s: s \mathcal{A} \rightarrow \R.$$

For semiconcave functions the above differential is well defined and we can study gradients and supporting
vectors. 

\begin{definition}[Gradients and supporting vectors]
Given an open set $\Omega \subset \mathcal{A}$ and a function $f:  \Omega \rightarrow \R$ we denote by $\nabla_p f$  the \emph{gradient} of a function 
$f$ at a point $p \in \Omega$.  $\nabla_p f$ is the vector $v \in T_p$ such that
\begin{enumerate}
\item[(i)] $d_p f(x) \leq \langle v, x\rangle$ for all $x\in T_p$
\item[(ii)] $d_p f(v)=\langle v,v\rangle$.
\end{enumerate}
For a semiconcave $f$ the gradient exists and is unique (Theorem 1.7 in \cite{Lyt}).
We say $s\in T_p$ is a \emph{supporting vector} of $f$ at $p$ if  
$d_p f(x) \leq - \langle s, x\rangle$ for all $x\in T_p$.
 Note that 
$-\nabla_p f$ is a supporting vector if it exists in the tangent cone at $p$.
\end{definition}

\begin{lem}\label{lem:suppvectoriszero}
\begin{enumerate}
\item[(i)] If  $s$ is a supporting vector then $\|s\| \geq \|\nabla_p f\|$.
\item[(ii)] If $p$ is local minimum of $f$ and $s$ is a supporting vector of $f$ at $p$ then $s=0$.
\end{enumerate}
\end{lem}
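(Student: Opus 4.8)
The plan is to prove both parts directly from the definitions of gradient and supporting vector, using the characterization of $\nabla_p f$ as the vector satisfying (i) and (ii) and the defining inequality of a supporting vector.

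For part (i): Let $s$ be a supporting vector, so $d_p f(x) \leq -\langle s, x\rangle$ for all $x \in T_p$. I would apply this inequality at the particular tangent vector $x = \nabla_p f$. This gives $d_p f(\nabla_p f) \leq -\langle s, \nabla_p f\rangle$. But by property (ii) of the gradient, $d_p f(\nabla_p f) = \langle \nabla_p f, \nabla_p f\rangle = \|\nabla_p f\|^2$. Hence $\|\nabla_p f\|^2 \leq -\langle s, \nabla_p f\rangle \leq \|s\|\,\|\nabla_p f\|$, where the last step is the Cauchy--Schwarz inequality in the tangent cone (valid since the inner product on $T_p$ is an honest inner product once angles are fixed, or more carefully since $-\langle s,\nabla_p f\rangle \le |\langle s,\nabla_p f\rangle| \le \|s\|\|\nabla_p f\|$ from $d_{T_p}(s,\nabla_p f)^2 \ge 0$ and its analogue with $-s$). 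Dividing by $\|\nabla_p f\|$ when it is nonzero yields $\|s\| \geq \|\nabla_p f\|$; the case $\nabla_p f = 0$ is trivial.

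For part (ii): If $p$ is a local minimum of $f$, then along any unit-speed geodesic $\gamma$ emanating from $p$ the function $f\circ\gamma$ has a local minimum at $0$, so its one-sided derivative at $0$ is nonnegative; this is exactly the statement that $d_p f(x) \geq 0$ for every $x \in T_p$ arising from a geodesic, and hence (by the density of such directions in $\Sigma_p$ and continuity of $d_p f$, or directly from the definition of $d_p f$ as a limit of nonnegative rescaled increments) $d_p f \geq 0$ on all of $T_p$. In particular $\|\nabla_p f\|^2 = d_p f(\nabla_p f) \geq 0$, but also from property (i), $d_p f(x) \le \langle \nabla_p f, x\rangle$; combining $0 \le d_p f(\nabla_p f) = \|\nabla_p f\|^2$ with the fact that a local minimum forces $\nabla_p f = 0$ (the standard fact that the gradient of a semiconcave function vanishes at a local minimum, which follows since if $\nabla_p f \neq 0$ then the gradient flow strictly decreases $f$) gives $\nabla_p f = 0$. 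Then for any supporting vector $s$, part (i) gives $\|s\| \geq \|\nabla_p f\| = 0$, so $\|s\| = 0$ and $s = 0$.

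The main obstacle I anticipate is the rigor around part (ii): justifying that a local minimum forces $d_p f \geq 0$ on the whole tangent cone $T_p$ and consequently $\nabla_p f = 0$. The cleanest route is to invoke directly that $d_p f(x) \ge 0$ for $x$ induced by geodesics (immediate from $f\circ\gamma$ having a minimum at $0$), note $\widehat\Sigma_p$ is dense in $\Sigma_p$, and use that $d_p f$ is continuous (Lipschitz, since $f$ is semiconcave) to extend the inequality; then property (ii) gives $\|\nabla_p f\|^2 = d_p f(\nabla_p f) \ge 0$ while property (i) with $x = \nabla_p f$ combined with $d_p f(\nabla_p f)\ge 0$ and testing (i) against nearby directions pins down $\nabla_p f = 0$. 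Alternatively one simply cites the standard Alexandrov-geometry fact (e.g. from \cite{Petrunin} or \cite{Lyt}) that the gradient of a semiconcave function vanishes exactly at its critical points, local minima among them, and then deduces $s = 0$ purely formally from part (i).
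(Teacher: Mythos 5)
Your part (i) is fine and is essentially the paper's argument: you test the supporting-vector inequality against $x=\nabla_p f$ and use $d_pf(\nabla_p f)=\langle\nabla_p f,\nabla_p f\rangle$; the paper finishes by expanding $0\le\langle\nabla_p f+s,\nabla_p f+s\rangle$ while you finish with Cauchy--Schwarz, which in the cone is the same statement ($\cos\angle\ge -1$), so there is no real difference there.

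Part (ii), however, contains a genuine logical error in the concluding step. You establish $d_pf\ge 0$ on $T_p$ (correct, and your care about density of geodesic directions is fine), argue $\nabla_p f=0$, and then write that part (i) gives $\|s\|\ge\|\nabla_p f\|=0$, ``so $\|s\|=0$.'' That is a non sequitur: part (i) only \emph{lower}-bounds the norm of a supporting vector by the norm of the gradient, so with $\nabla_p f=0$ it yields the vacuous statement $\|s\|\ge 0$ and says nothing about $s$ being zero. Nothing in your chain rules out a nonzero supporting vector once the gradient vanishes, so as written the proof of (ii) does not go through. The fix is short and uses only what you already have: test the supporting-vector inequality at $x=s$ itself. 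Since $p$ is a local minimum, $d_pf(s)\ge 0$, while the defining inequality of a supporting vector gives $d_pf(s)\le-\langle s,s\rangle$; hence $\langle s,s\rangle\le 0$, so $s=0$. This is exactly the paper's argument, and it also makes your detour through $\nabla_p f=0$ (gradient flow, citing \cite{Petrunin} or \cite{Lyt}) unnecessary.
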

\begin{proof}
(i) First observe that  from the definitions of $\nabla_p f$ and supporting vectors we have 
$$\langle  \nabla_p f, \nabla_p f \rangle= d_p f(\nabla_p f)\leq -\langle s, \nabla_p f\rangle.$$
We also know that 
$$0\leq \langle \nabla_p f +s, \nabla_p f+s\rangle= \langle \nabla_p f, \nabla_p f\rangle + 2\langle \nabla_p f,s\rangle + \langle s, s\rangle.$$ These inequalities combined tell us that $0\leq -\langle \nabla_p f, \nabla_p f\rangle +  \langle s, s\rangle.$

(ii) If $p$ is a local minimum of $f$ then $d_pf(x)\geq 0$ for all $x\in T_p$. In particular $d_p(s)\geq 0$. Since $s$ is a supporting vector $-\langle s, s \rangle \geq d_p f(s) \geq 0$. This implies $\langle s,s\rangle =0$  and hence $s=0$.
\end{proof}

We care about gradients and supporting vectors because they can help
us find local minima of the Fr\'echet function. Indeed a necessary 
condition for $F$ to have local minimum at $Y$ is $s=0$ for any supporting vector $s$ of $F$ at $Y$. Since the tangent cone at $Y$ is a convex subset of a Hilbert space we can take integrals over probability measures with values in $T_Y$. This allows us to find a formula for a supporting vector of the Fr\'echet function $F$.

\begin{prop}\label{lem:sumsupp}
Let $Y\in \D_{L^2}$. For each $X\in \D_{L^2}$ let $F_X:Z \mapsto d(X, Z)^2$.
\begin{enumerate}
\item[(i)]  If $\gamma$ is a distance achieving geodesic from $Y$ to $X$, then the tangent vector to $\gamma$ at $Y$ of length $2d(X,Y)$ is a supporting vector at $Y$ for $F_X$.
\item[(ii)] If $s_X$  is a supporting vector at $Y$ for the function $F_X$ for each $X\in \text{supp}(\rho)$ then $s=\int s_Xd\rho(X)$ is a supporting vector at Y of the Fr\'echet function $F$ corresponding to the distribution $\rho$. 
\end{enumerate}
\end{prop}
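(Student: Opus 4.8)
The plan is to establish (i) by a pointwise computation in the concrete description of the tangent cone $T_Y$, using that $d(X,\cdot)^2$ is an infimum over matchings so that \emph{any} fixed matching already supplies an upper bound, and then to deduce (ii) by integrating the defining inequality for a supporting vector over $\rho$ and interchanging the integral with the inner product and with the limit that defines the differential.

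For (i), let $\phi$ be an optimal matching realizing a distance-achieving geodesic $\gamma$ from $Y$ to $X$, so $\gamma(t)=\{(1-t)y+t\phi(y)\,|\,y\in Y\}$ as in \eqref{eq:geobij} (copies of the diagonal read off through their nearest point). In the concrete picture the direction of $\gamma$ at $Y$ is the tangent vector $u\colon y\mapsto (\phi(y)-y)/d(X,Y)$, which has unit norm since $\sum_y\|u(y)\|^2=d(X,Y)^{-2}\sum_y\|y-\phi(y)\|^2=1$; hence the asserted supporting vector is $s_X=2d(X,Y)\,u$, i.e. $s_X\colon y\mapsto 2(\phi(y)-y)$ (this is just $-\nabla_YF_X$, which lies in $T_Y$ because the geodesic to $X$ exists by Proposition \ref{prop:infismin}). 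First take $v\in T_Y$ realized by a geodesic, written $\eta(s)=\{y+s\,v(y)\,|\,y\in Y\}$ for small $s>0$. Matching $\eta(s)$ to $X$ through $\phi$ (a valid, not necessarily optimal, matching; matching to the diagonal only lowers the cost) and using \eqref{eq:defdistance},
$$F_X(\eta(s))\le \sum_{y\in Y}\|y+s\,v(y)-\phi(y)\|^2 = F_X(Y)+2s\sum_{y\in Y}\langle v(y),\,y-\phi(y)\rangle+s^2\|v\|^2,$$
where the middle series converges by Cauchy--Schwarz since $\|v\|<\infty$ and $\sum_y\|y-\phi(y)\|^2=d(X,Y)^2$. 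Dividing by $s$ and letting $s\downarrow0$ gives $d_YF_X(v)\le 2\sum_y\langle v(y),y-\phi(y)\rangle=-\langle s_X,v\rangle_Y$. Since $F_X$ is semiconcave (Proposition \ref{prop:semiconcave} applied to $\rho=\delta_X$), the differential $d_YF_X$ is continuous on $T_Y$; as the geodesic directions are dense in $\Sigma_Y$ and $\langle s_X,\cdot\rangle_Y$ is continuous, the inequality $d_YF_X(v)\le-\langle s_X,v\rangle_Y$ persists for all $v\in T_Y$, which is (i).

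For (ii), note that $T_Y$ is a convex subset of a Hilbert space, so $s:=\int s_X\,d\rho(X)$ is a well-defined element of $T_Y$ provided $X\mapsto s_X$ is measurable with $\int\|s_X\|\,d\rho(X)<\infty$ (automatic for the natural choices, e.g. the vectors from (i), for which $\|s_X\|=2d(X,Y)$ and the second moment of $\rho$ is finite anyway since $F$ is finite). Fix a geodesic direction $v$ at $Y$, realized by a unit-speed geodesic $\gamma$. By the concavity of $s\mapsto d(\gamma(s),X)^2-s^2=F_X(\gamma(s))-s^2$ established inside the proof of Proposition \ref{prop:semiconcave}, the difference quotient $\frac{(F_X(\gamma(s))-s^2)-F_X(Y)}{s}$ is nonincreasing in $s$, so $\frac{F_X(\gamma(s))-F_X(Y)}{s}\le d_YF_X(v)+s$ for every $s>0$ in the domain of $\gamma$. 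Integrating over $\rho$, using $F=\int F_X\,d\rho$, and letting $s\downarrow0$ yields $d_YF(v)\le\int d_YF_X(v)\,d\rho(X)$. Combining this with $d_YF_X(v)\le-\langle s_X,v\rangle_Y$ and moving the integral inside the inner product,
$$d_YF(v)\le\int -\langle s_X,v\rangle_Y\,d\rho(X)=-\Bigl\langle \int s_X\,d\rho(X),\,v\Bigr\rangle_Y=-\langle s,v\rangle_Y.$$
Extending from geodesic directions to all of $T_Y$ by continuity and density exactly as in (i) shows that $s$ is a supporting vector of $F$ at $Y$.

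The genuinely routine steps are the algebraic expansion of $\|y+s\,v(y)-\phi(y)\|^2$ and the two uses of Cauchy--Schwarz. The points needing care are: the passage from geodesic directions to all of $T_Y$, which rests on density of $\widehat{\Sigma}_Y$ in $\Sigma_Y$ and on the regularity of differentials of semiconcave functions on Alexandrov spaces (as in \cite{Lyt,Ohta,Petrunin}); the measurability and integrability of $X\mapsto s_X$ that make $\int s_X\,d\rho(X)$ a legitimate vector-valued integral; and the bookkeeping with copies of the diagonal, both in the concrete description of $T_Y$ and in the matchings. I expect the interchange of $\int$ with the limit $s\downarrow0$ in (ii) to look the most delicate, but it is tamed by the uniform-in-$s$ estimate $\frac{F_X(\gamma(s))-F_X(Y)}{s}\le d_YF_X(v)+s$ coming from the already-proven concavity; that estimate, together with the diagonal bookkeeping, is where I anticipate the real work.
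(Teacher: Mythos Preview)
Your proof is correct and follows essentially the same strategy as the paper: for (i), bound $F_X$ along a test geodesic by inserting the matching $\phi$ that realizes $\gamma$, expand the square, and pass to the limit; for (ii), integrate the supporting-vector inequality against $\rho$ and move the integral inside the inner product.

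The presentational differences are worth noting. For (i) the paper computes $\langle s_X,v\rangle_Y$ through the abstract angle formula $\cos\angle_Y(\gamma,\tilde\gamma)$, expanding $d(\gamma(s),\tilde\gamma(t))^2$ pointwise via the Euclidean law of cosines and identifying the resulting limit with the differential bound; you instead work directly in the concrete Hilbert-like picture of $T_Y$ (vectors attached to each point of $Y$) and read off the inner product as $\sum_y\langle\cdot,\cdot\rangle$. Your route is shorter but silently relies on the identification of that concrete inner product with the abstract cone inner product, which the paper only sketches informally. For (ii) you are actually more careful than the paper: the paper writes $d_YF(v)=\int d_YF_X(v)\,d\rho(X)$ as an equality without comment, whereas you justify the needed inequality by invoking the concavity of $s\mapsto F_X(\gamma(s))-s^2$ from Proposition~\ref{prop:semiconcave} to control the interchange of limit and integral. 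Both arguments also need the density/continuity step to pass from geodesic directions to all of $T_Y$; you make this explicit, the paper leaves it implicit by restricting to $v=(\tilde\gamma,1)$.
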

\begin{proof}
(i) Let $\gamma$ be a unit speed geodesic from $Y$ to $X$. Consider the tangent vector $s_X=(\gamma, 2d(X,Y))$. Let $\gamma(t)_i$ denote the point in $\gamma(t)$ that is sent to $x_i \in X$. Since $\gamma$ is a distance achieving geodesic we know that 
$$\inf_{\phi:\gamma(0)\to X} \sum_i\|x_i -\phi(x_i)\|^2 = \sum_i\|x_i-\gamma(0)_i\|^2=F_X(Y).$$

To show $d_YF_X(v)\leq \langle s_X, v\rangle$ for all $v\in T_Y$ it is sufficient to consider vectors of the form $(\tilde{\gamma}, 1)$ where $\tilde{\gamma}$ is a unit speed geodesic starting at $Y$. Let $\tilde{\gamma}(t)_i$ denote the point in $\tilde{\gamma}(t)$ which started at $\gamma(0)_i$. This means that $x_i \mapsto \tilde{\gamma}(t)_i$ is a bijection from $X$ to $\tilde{\gamma}(t)$ and
\begin{align*}
d_Y F_X(v) &= \frac{d}{dt}\bigg|_{t=0} F_X(\tilde{\gamma}(t))\\
&=\lim_{t\to 0} \frac{F_X(\tilde{\gamma}(t))-F_X(Y)}{t}\\
&=\lim_{t\to 0}\frac{\inf\{\sum \|x_i-\phi(x_i)\|^2-\|x_i-\gamma(0)_i\|^2\,|\, \phi:X \to \tilde{\gamma}(t)\}}{t}\\
&\le \lim_{t\to 0}\frac{\sum \|x_i-\tilde{\gamma}(t)_i\|^2-\|x_i-\gamma(0)_i\|^2}{t}\\
&=\lim_{t\to 0}\frac{\sum \|\tilde{\gamma}(0)_i-\tilde{\gamma}(t)_i\|^2-2 \|\tilde{\gamma}(0)_i-\tilde{\gamma}(t)_i\| \|x_i-\gamma(0)_i\| \cos \theta_i}{t}\\
\end{align*}
where $\theta_i$ is the angle between the paths $s\mapsto \gamma(s)_i$ and $t\mapsto \tilde{\gamma}(t)_i$ in the plane. Now $$\|x_i-\gamma(0)_i\|=\|\gamma(d(X,Y))_i-\gamma(0)_i\|=d(X,Y)\frac{\|\gamma(s)_i-\gamma(0)_i\|}{s} $$ for all $s>0$ and $\|\tilde{\gamma}(0)_i-\tilde{\gamma}(t)_i\|^2=t^2\|\tilde{\gamma}(0)_i-\tilde{\gamma}(1)_i\|^2$ for all $t$. This implies that 
$$d_Y F_X(v) \leq -2d(X,Y)\lim_{t,s\downarrow 0}\frac{\sum \|\tilde{\gamma}(t)_i-\tilde{\gamma}(0)_i\| \|\gamma(s)_i-\gamma(0)_i\| \cos \theta_i}{st}.$$
Recall from our construction of the tangent cone that
\begin{align*}
\langle v, s_X\rangle&= 2d_{L^2}(X,Y) \cos(\angle_Y (\gamma, \tilde{\gamma}))\\
&=2 d(X,Y) \left(\lim_{s,t\downarrow 0} \frac{s^2+t^2 - d(\gamma(s), \tilde{\gamma}(t))^2}{2st}\right)\\
&=2d(X,Y) \left(\lim_{s,t\downarrow 0}\frac{\sum\|\gamma(s)_i-\gamma(0)_i\|^2+\|\tilde{\gamma}(t)_i-\tilde{\gamma}(0)\|^2 - \|\gamma(s)_i- \tilde{\gamma}(t)_i\|^2}{2st}\right)\\
&=2d(X,Y)\left(\lim_{t,s\downarrow 0}\frac{\sum \|\tilde{\gamma}(t)_i-\tilde{\gamma}(0)_i\| \|\gamma(s)_i-\gamma(0)_i\| \cos \theta_i}{st}\right).
\end{align*}
By comparing these equations we get $d_Y F_X(v) \leq -\langle v, s_X\rangle$ and thus we can conclude $s_X$ is a supporting vector.

(ii) Now let $s_X$ be any supporting vector of $F_X$. By its definition we know that $d_Y F_X(v) \leq - \langle s_X, v\rangle$ for all $v\in T_Y$ and hence 
\begin{align*}
d_Y F(v)&=\int d_Y F_X(v) d\rho(X) \leq \int \left( - \langle s_X, v  \rangle \right)d\rho(X)= -\left \langle \int  s_X d\rho(X) , v \right \rangle.
\end{align*}
\end{proof}

In the following section we provide an algorithm that computes a
local minimum of a Fr\'echet function using a gradient
descent procedure. The above results will be used since computing a supporting 
vector of $Z \mapsto d(X,Z)^2$ can be significantly 
easier and faster than computing a supporting vector of $F$ itself
 
\section{Finding local minima of the Fr\'echet function}
In this section we state an algorithm that computes a Fr\'echet mean 
of a finite set of persistence diagrams with finitely many off diagonal points, and examine convergence properties
of this algorithm. We will restrict our attention to diagrams with
only finitely many off-diagonal points with multiplicity of the points
allowed. 

Given a set of persistence diagrams $\{X_i\}_{i=1}^m$ a 
Fr\'echet mean $Y$ is a diagram that satisfies
$$\min_{Y \in \D_{L^2}} \left[ F_m := \int_{\D_{L^2}} d(X,Y)^2 d \rho_m (X) \right],$$
with the empirical measure $\rho_m := m^{-1} \sum_{i=1}^m \delta_{X_i}$.

We employ a greedy search algorithm based on gradient descent 
to find a local minimum. A key component of this greedy algorithm 
(see Algorithm \ref{findlocalmin}) consists of a variant of the 
Kuhn-Munkres (Hungarian) algorithm
\cite{Munkres57}.

The Hungarian algorithm finds the least cost assignment of tasks to people under the assumption that the number of tasks and people are the same. The input is the cost for each person to do each of the tasks. Suppose we have two diagrams $X$ and $Y$ each with only finitely many off diagonal points. Consider as many copies of the diagonal in $X$ and $Y$ to allow the option of matching every off diagonal point with the diagonal. We can think of the  points and copies of the diagonal in $X$ as the people and the points and copies of the diagonal in $Y$ as tasks. The cost of $x\in X$ doing task $y\in Y$ is $\|x-y\|^2$. The total cost of an assignment (or in other words bijection) $\phi$ of tasks to people is $\sum_{x\in X} \|x-\phi(x)\|^2$. The  Hungarian algorithm gives us a bijection $\phi$ that minimizes this cost. This means it gives an optimal pairing between $X$ and $Y.$

We would like to use the arithmetic mean of points in the plane and some number of copies of the diagonal. If $x_1, \ldots, x_m$ are points in $\R^2$ then there arithmetic mean $w=\frac{1}{n}\sum_{i=1}^m x_i$ is the choice of $z$ that minimizes the sum $\sum_{i=1}^m \|z-x_i\|^2$. If $x_i=\Delta$ for all $i$ then the arithmetic mean is set to be $\Delta$. The final case, without loss of generality, is when $x_1, \ldots , x_k$ are all off diagonal points and $x_{k+1}, \ldots, x_m$ are all the diagonal. Let $w$ be the normal arithmetic mean of $x_1, \ldots , x_k$ and let $w_\Delta$ be the closest point on the diagonal to $w$. We set 
$$w':=\frac{kw+(m-k)w_\Delta}{m}$$ to be the arithmetic mean of $x_1, \ldots, x_m$. This is the choice of $z$ that minimizes $\sum_{i=1}^m \|z-x_i\|^2$. We use an operation 
$\mbox{mean}_{i=1,..,m}(x_i^j)$ that computes the arithmetic mean
for each pairing over the diagrams. 

Suppose $Y$ is our current estimate for the Fr\'echet mean. Using the Hungarian algorithm we compute optimal pairings between $Y$ and each  of the $X_i$. We denote these pairings as $\{(y^j, x_i^j)\}_{j=1}^{J_i}$ where $J_i$ is the number of off diagonal in $X_i$ and $Y$ combined. For each $y_j\neq \Delta$ we then consider all the $x_{ij}$. Let $\tilde{y^j}$ be the arithmetic mean of the $x_{ij}$. Whenever in our pairings $\{(y^j, x_i^j)\}_{j=1}^{J_i}$ we see a $(\Delta, x_i^j)$ we think this as a different copy of the diagonal as in any pairing between $Y$ and $X_k$ with $k\neq i$. We would be using the arithmetic mean of $m-1$ copies of the diagonal and $x_i^j$. Let $Y'$ be the diagram with points $\tilde{y^j}$. We will show later that if $Y=Y'$ then $Y$ is a local minimum of the Fr\'echet function. Otherwise we chose $Y'$ to be our current estimate.

The basic steps of Algorithm 
\ref{findlocalmin} is to:
\begin{enumerate}
\item[(a)] randomly initialize the mean diagram. For example we can start at one of the $m$
persistence diagrams or the midway point of two of the $m$ diagrams;
\item[(b)] use the Hungarian algorithm to compute optimal pairings
between the estimate of the mean diagram and each of the persistence
diagrams;
\item[(c)] update each point in the mean diagram estimate with the 
arithmetic mean over all diagrams -- each point in the mean diagram
is paired with a point (possibly on the diagonal) in each diagram;
\item[(d)] if the updated estimate locally minimizes $F_m$
then return the estimate otherwise return to step (b). 
\end{enumerate}

\begin{algorithm}\label{findlocalmin}
\SetKwInOut{Input}{input} \SetKwInOut{Return}{return}

\vspace{.2in}

\Input{ \mbox{persistence diagrams} $\{X_1, \ldots, X_m\}$} \vspace{.1in}
\vspace{.1in}
 \Return{\mbox{Fr\'echet mean} $\{Y\}$} \vspace{.1in}

\vspace{.2in}

Draw $i \sim \mbox{Uniform}(1,...,n)$;  \tcc{randomly draw a diagram} 
Initialize $Y \leftarrow X_i$;  \tcc{initialize $Y$}

\vspace{.1in}
 stop $\leftarrow$ false \;

\Repeat{stop=true}{

$K = |Y|$;         \tcc{the number of non-diagonal points in $Y$} 

\For{i=1,\ldots, m}{
     $(y^j, x_i^j) \leftarrow \mbox{Hungarian}(Y,X_i)$ ; \tcc{compute optimal pairings between each $X_i$ and $Y$ using the Hungarian algorithm}   
     }
     
\For{j=1,\ldots K}{
    $y^j \leftarrow \mbox{mean}_{i=1,..,m}(x_i^j)$ \tcc{set each non-diagonal point in $Y$ to the arithmetic mean of its pairings} 
     }
     
\lIf{Hungarian$(Y,X_i) = (y_j,x_i^j)$}{stop $\leftarrow$ true} \tcc{The points in the updated $Y$ are optimal pairings w.r.t. each $X_i$}

}

\Return{Y}

\caption{Algorithm for computing the Fr\'echet mean $Y$ from persistence diagrams $X_1,\ldots, X_m$.}
\end{algorithm}

An alternative to the above greedy approach would be a brute force 
search over point configurations to find a Fr\'echet mean. One
way to do this is to list all possible pairings between points
in each pair of diagrams. Then compute the arithmetic mean for 
all such pairings. One of these means will be a Fr\'echet mean.
While this approach will find the complete mean set its combinatorial complexity is prohibitive.

\subsection{Convergence of the greedy algorithm}

The remainder of this section provides convergence properties for
Algorithm \ref{findlocalmin}. By convergence we mean that the algorithm
will terminate at some point having found a local minimum. The reason for this is
that at each iteration the cost function $F_m$ decreases, at each iteration the algorithm uses a new set of pairings, and there are
only finitely many combinations of pairings between points in the diagrams.

We first develop necessary and sufficient conditions for a diagram $Y$
to be a local minimum of a set of persistence diagrams. We define
$F_i (Z):=  d(Z,X_i)^2$, the Fr\'echet function corresponding to 
$\delta_{X_i}$. This allows us to define the Fr\'echet function
as $F= \frac{1}{m} \sum_{i=1}^m F_i$ corresponding to the the distribution $\frac{1}{m}\sum_{i=1}^m \delta_{X_i}$.

The following lemma provides a necessary condition for a diagram to be
a local minimum of $F$. This condition is the stopping
criterion in Algorithm \ref{findlocalmin}.

\begin{lem}\label{lem:necessary}
If $W = \{w_i\}$ is a local minimum of the Fr\'echet function $F = \frac{1}{m} \sum_{j=1}^m F_j$ $F$ then there is a unique optimal 
pairing from $W$ to each of the $X_j$ which we denote as $\phi_j$ and
each $w_i$ is the arithmetic mean of the points 
$\{\phi_j(w_i)\}_{j=1, 2 \ldots m}$. Furthermore if $w_k$ and $w_l$ are off-diagonal points such that $\|w_k-w_l\|=0$ then $\|\phi_j(w_k)-\phi_j(w_l)\|=0$ for each $j$.
\end{lem}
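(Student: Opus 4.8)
The plan is to derive each conclusion from the first-order optimality condition for $F$ at a local minimum, namely that every supporting vector of $F$ at $W$ must vanish (Lemma~\ref{lem:suppvectoriszero}(ii)), together with the explicit description of supporting vectors of the $F_j$ from Proposition~\ref{lem:sumsupp}(i) and the linearity/integration formula of Proposition~\ref{lem:sumsupp}(ii). First I would suppose, for contradiction, that from $W$ there are two distinct optimal pairings $\phi_j, \phi_j'$ to some $X_j$. Each optimal pairing induces a distance-achieving geodesic from $W$ to $X_j$, hence (by Proposition~\ref{lem:sumsupp}(i)) a supporting vector of $F_j$ at $W$ of length $2d(W,X_j)$, pointing in the direction of that geodesic. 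Averaging over $j$ (using $\delta_{X_i}$-type masses, so the average is just $\frac1m\sum_j$) produces a supporting vector $s$ of $F$; by the optimality condition $s=0$. Because the tangent cone $T_W$ embeds isometrically in a Hilbert space, the inner product is genuinely an inner product, so $0=\|s\|^2 = \|\frac1m\sum_j s_j\|^2$ forces each contribution to cancel; since the $s_j$ coming from distinct $X_j$ live in ``independent coordinates'' (the vectors attached to the points $w_i$, one per $w_i$), the only way cancellation occurs is if, for each fixed $i$ and each $j$, the vector at $w_i$ coming from $\phi_j$ is already zero --- equivalently $w_i = \phi_j(w_i)$ --- or, more carefully, if for each $i$ the averaged vector at $w_i$ is zero. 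This last statement, unwound, says precisely that $w_i$ is the arithmetic mean of $\{\phi_j(w_i)\}_{j=1}^m$, which is the asserted ``mean'' condition. Uniqueness of the optimal pairing then follows: if two distinct optimal pairings existed, one could construct a nonzero supporting vector (take the difference of the two induced tangent vectors at some $w_i$, suitably placed), contradicting $s=0$; I would make this precise by noting that two optimal pairings that disagree somewhere yield tangent vectors whose average at the disagreement point is nonzero unless the corresponding geodesics coincide there.

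For the mean condition itself, the cleanest route avoids supporting vectors: if $W=\{w_i\}$ is a local minimum with optimal pairing $\phi_j$ to each $X_j$, then perturbing $w_i$ slightly to $w_i'$ keeps $\phi_j$ optimal (or at least admissible) for small perturbations, so $F(W) \le \frac1m\sum_j \sum_i \|w_i' - \phi_j(w_i)\|^2$; minimizing the right-hand side over $w_i'$ (coordinatewise, with the diagonal-handling convention spelled out in the paragraph preceding Algorithm~\ref{findlocalmin}) shows $w_i$ must equal the arithmetic mean of its partners, else we could strictly decrease $F$. One must be slightly careful that moving $w_i$ toward the mean does not change which copies of the diagonal are used or destroy optimality of $\phi_j$; for a sufficiently small move this is fine, and since a strict improvement is available whenever $w_i$ is not the mean, local minimality is violated.

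For the last sentence: suppose $w_k \neq w_l$ as labels but $\|w_k-w_l\|=0$ (they coincide as points of $\R^2$), and suppose $\|\phi_j(w_k)-\phi_j(w_l)\|>0$ for some $j$. By the mean condition just established, $w_k = \mathrm{mean}_j\phi_j(w_k)$ and $w_l=\mathrm{mean}_j\phi_j(w_l)$; since $w_k=w_l$ as points, these two means are equal. Now consider the competing ``swapped'' pairing that exchanges the partners of $w_k$ and $w_l$ in $X_j$ while keeping everything else fixed. Because $w_k=w_l$, the swapped pairing has exactly the same cost to $X_j$, hence is also optimal; but then both the original and swapped pairings are optimal pairings from $W$ to $X_j$, and they are distinct (they differ precisely on $w_k,w_l$, whose partners in $X_j$ are distinct points), contradicting the uniqueness of the optimal pairing proven above. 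Hence $\|\phi_j(w_k)-\phi_j(w_l)\|=0$ for every $j$.

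The main obstacle I anticipate is making the ``independent coordinates'' cancellation argument fully rigorous in the tangent cone $T_W$: one needs the geometric description of $T_W$ (a vector in $\R^2$ attached to each $w_i$, plus perpendicular vectors along the diagonal) together with the fact that the inner product $\langle\cdot,\cdot\rangle_W$ splits as the obvious sum over those coordinates, so that $\|\sum_j s_j\|^2=0$ really does localize to $w_i$-by-$w_i$ conditions; and one must handle the diagonal copies with care, since a point $w_i$ paired with the diagonal in $X_j$ contributes a perpendicular vector and the ``arithmetic mean'' there uses the convention with $m-k$ diagonal copies described before the algorithm. Everything else is bookkeeping around these two points.
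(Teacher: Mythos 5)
Your proposal is correct, and for the parts that carry the real weight it follows the paper's own route: build the tangent vectors $s_j$ induced by optimal pairings, use Proposition \ref{lem:sumsupp} to see that $\tfrac{2}{m}\sum_j s_j$ is a supporting vector of $F$, kill it with Lemma \ref{lem:suppvectoriszero}(ii), read off the mean condition from the point-by-point decomposition of the tangent cone at $W$, get uniqueness by noting that a second optimal pairing $\tilde\phi_k$ yields a second vanishing supporting vector and hence $\tilde s_k=s_k$ pointwise, and then run the swap argument for coincident points exactly as the paper does. Two remarks. First, your parenthetical ``take the difference of the two induced tangent vectors'' is not by itself a supporting vector, and ``independent coordinates'' across different $j$ is not the right splitting (the decomposition is over the points $w_i$ and the diagonal, not over $j$); but the corrected versions you immediately give --- both averaged sums are supporting vectors, both vanish at a local minimum, so the per-point averages coincide --- are precisely the paper's mechanism, so these are phrasing issues rather than gaps. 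Second, your middle paragraph is a genuinely more elementary derivation of the mean condition that the paper does not use: keeping each $\phi_j$ merely as an admissible matching for the perturbed diagram $W'$ gives $F(W')\le\frac1m\sum_j\sum_i\|w_i'-\phi_j(w_i)\|^2$ (you wrote $F(W)$ on the left, a typo), and this upper bound is strictly decreased by moving $w_i$ toward the diagonal-convention arithmetic mean, contradicting local minimality; this sidesteps the tangent-cone machinery for that clause, though you still need the supporting-vector (or some comparable) argument for uniqueness and hence for the final clause.
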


\begin{proof}
Let $\phi_j$ be some optimal pairings (not yet assumed to be unique)
between $Y$ and $X_j$ and let $s_j$ be the corresponding vectors in
the tangent cone at $Y$ that are tangent to the geodesics induced by
$\phi_j$ and are of length $d(X_j,Y)$. The $2s_j$ are supporting vectors for the functions $F_j(Y)=
 d(Y, X_j)^2$ by Proposition \ref{lem:sumsupp}, so we have $\frac2m\sum_{j=1}^m s_j$ is a supporting vector of $F$.

From Lemma \ref{lem:suppvectoriszero} we know that $\frac{2}{m}\sum_{j=1}^m s_j=0$. Since at each $w_i$ the $s_j$ gives the vector from $w_i$ to $\phi_j(w_i)$, $\sum_{j=1}^m s_j=0$ implies that $w_i$ is the arithmetic mean of the points $\{\phi_j(w_i)\}_{j=1, 2 \ldots m}$.

Now suppose that $\phi_k$ and $\tilde{\phi_k}$ are both optimal pairings. By the above reasoning we have 
$\frac1m(\tilde{s_k} + \sum_{j=1, j\neq k}^m s_j)=0 =\frac1m\sum_{j=1}^m s_j$  and hence $\tilde{s_k} = s_k$. This implies that $\|\tilde{\phi_k}(w_i) -\phi_k(w_i)\|=0$ for all $w_i\in W$. In particular, for off-diagonal points $w_k$ and $w_l$ with $\|w_k-w_l\|=0$ and $\phi_k$ an optimal pairing, we can consider the pairing $\tilde{\phi}_k$ with $w_k$ and $w_l$ swapped. Since $\|\tilde{\phi_k}(w_i) -\phi_k(w_i)\|=0$ for all $w_i\in W$ we can conclude that $\|\phi_j(w_k)-\phi_j(w_l)\|$.

\end{proof} 

We now prove that the above is also a sufficient condition for $W$ to be 
a local minimum of $F$ when $F$ is the Fr\'echet function for the measure $\frac{1}{m}\sum_i\delta_{X_i}$ withe the diagrams $X_i$ each with finitely many off-diagonal points. This requires a result about a local extension of optimal pairings.

\begin{prop}\label{prop:unique pairing}
Let $X$ and $Y$ be diagrams, each with only finitely many off diagonal points, such that there is a unique optimal pairing $\phi_X^Y$ between them and no off diagonal point in $X$ matches the diagonal in $Y$. We further stipulate that if $y_k$ and $y_l$ are off-diagonal points with $\|y_k-y_l\|=0$ then $\|(\phi_X^Y)^{-1}(y_k)-(\phi_X^Y)^{-1}(y_l)\|=0$. There is some $r>0$ such that for every $Z \in B(Y,r)$ there is a unique optimal pairing between $X$ and $Z$ and this optimal pairing is induced from the one from $X$ to $Y$. By this we mean there is a unique optimal pairing $\phi_Y^Z$ from $Y$ to $Z$ and that the unique optimal pairing from $X$ to $Z$ is $\phi_Y^Z \circ \phi_X^Y$.

Furthermore, if $X_1, X_2, \ldots, X_m$ and $Y$ are diagrams with finitely many off-diagonal points such that there is a  unique optimal pairing $\phi_{X_i}^Y$ between $X_i$ and $Y$ for each $i$ with the same conditions as above, then there is some $r>0$ such that for every $Z \in B(Y,r)$ there is a unique optimal pairing between each $X_i$ and $Z$ and this optimal pairing is induced by the one from $X_i$ to $Y$.
\end{prop}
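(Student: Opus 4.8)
The key structural fact I would exploit is that, because $X$ and $Y$ have only finitely many off-diagonal points, an optimal pairing between them is the solution of a \emph{finite} combinatorial optimization. List the finitely many \emph{pairing types} between $X$ and $Y$: a pairing type assigns to each off-diagonal point of $X$ either an off-diagonal point of $Y$ or the diagonal, injectively, and sends the remaining off-diagonal points of $Y$ to the diagonal. For a type $P$ let $c_{XY}(P)=\sum_x\|x-P(x)\|^2$ be its cost, so $d(X,Y)^2=\min_P c_{XY}(P)$. Identify two types that differ only by a permutation of mutually coincident off-diagonal points (these have the same cost); with this convention the first hypothesis says exactly that $\phi_X^Y$ is the \emph{unique} minimizing type, whence
$$\delta := \min\{\, c_{XY}(P)-d(X,Y)^2 \;:\; P \text{ a pairing type, } P\not\equiv\phi_X^Y \,\} > 0 .$$

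Step one is to describe the optimal pairing from $Y$ to a nearby diagram. Put $\mu:=\min_i\|y_i-\Delta\|>0$ and let $\sigma>0$ be the smallest distance between non-coincident off-diagonal points of $Y$, and suppose $d(Y,Z)<r$ with $r<\tfrac12\min(\mu,\sigma)$. In any optimal pairing $\psi:Y\to Z$ the total cost is $<r^2$, so every off-diagonal $y_i$ has $\|y_i-\psi(y_i)\|<r$; in particular $\psi(y_i)$ cannot be the diagonal (as $\|y_i-\Delta\|\ge\mu>r$), so $\psi(y_i)$ is an off-diagonal point of $Z$ within $r$ of $y_i$. A short argument --- a second off-diagonal point of $Z$ within $r$ of $y_i$ would force a cost term $\gtrsim\min(\mu,\sigma)^2>r^2$ --- shows the local picture near each coincidence class of $Y$ is forced and that every remaining off-diagonal point of $Z$ lies within $r$ of the diagonal. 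Hence $\psi$ is uniquely determined up to permuting coincident points: each $y_i\mapsto z_i$, its unique nearby partner in $Z$, and everything else to the diagonal. This is the asserted unique optimal pairing $\phi_Y^Z$, and it is ``induced by the identity on indices''.

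Step two transfers the gap $\delta$. Let $\psi^*$ be any optimal pairing $X\to Z$, and consider $\theta:=(\phi_Y^Z)^{-1}\circ\psi^*:X\to Y$, which is a pairing type between $X$ and $Y$. Since $\phi_Y^Z$ displaces mass by total squared distance exactly $d(Y,Z)^2<r^2$, a termwise comparison of the two costs --- the $\ell^2$ triangle inequality for the off-diagonal/off-diagonal terms, together with a crude $O(r)$ bound for each of the boundedly many terms that involve the diagonal --- yields $c_{XY}(\theta)\le d(X,Z)^2+O(r)\le(d(X,Y)+r)^2+O(r)=d(X,Y)^2+O(r)$, the implied constant depending only on $X$ and $Y$. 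Shrinking $r$ so this $O(r)$ is $<\delta$ forces $\theta\equiv\phi_X^Y$, hence $\psi^*\equiv\phi_Y^Z\circ\phi_X^Y$. Since an optimal pairing $X\to Z$ exists by Proposition~\ref{prop:infismin}, it follows that $\phi_Y^Z\circ\phi_X^Y$ is the unique optimal pairing from $X$ to $Z$, as claimed; the ``furthermore'' is then obtained by running this for each pair $(X_i,Y)$ and taking the minimum of the finitely many radii.

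The skeleton above is routine; the real work --- and where I expect the argument to get delicate --- is the bookkeeping around multiplicities and copies of the diagonal. Two points need care: (i) one must check that identifying pairings ``up to permutation of coincident points'' genuinely makes $\phi_Y^Z\circ\phi_X^Y$ well defined as a pairing from $X$ to $Z$, which is precisely where the hypothesis $\|y_k-y_l\|=0\Rightarrow\|(\phi_X^Y)^{-1}(y_k)-(\phi_X^Y)^{-1}(y_l)\|=0$ (together with ``no off-diagonal point of $X$ matches the diagonal in $Y$'') is used; and (ii) the termwise cost comparison in step two is not a single clean application of the triangle inequality, since for a point of $X$ matched to an off-diagonal point of $Z$ that sits near the diagonal the naive $\ell^2$-triangle bound fails and must be replaced by the direct $O(r)$ estimate noted above.
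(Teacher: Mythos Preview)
Your approach is essentially the same as the paper's: both arguments exploit the strictly positive gap between the optimal pairing $\phi_X^Y$ and all competitors (your $\delta$, the paper's $\beta$), and both show that the cost of a pairing $X\to Z$ varies by $O(r)$ from the corresponding cost of a pairing $X\to Y$ when $d(Y,Z)<r$, so that for $r$ small the competitor pairings cannot catch up. The ``furthermore'' is handled identically, by taking the minimum of the radii.

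The differences are organizational rather than substantive. You first establish the unique optimal pairing $\phi_Y^Z$ explicitly via the separation constants $\mu,\sigma$; the paper simply asserts at the outset that for $Z$ close enough to $Y$ there is a unique geodesic $Y\to Z$ and moves on. For the cost comparison, the paper parametrizes pairings $X\to Z$ by pairings $\phi:X\to Y$ through a pushforward cost $g_\phi(X,Z)=\sum_x\|x-\phi_Y^Z(\phi(x))\|^2+\sum_{z:(\phi_Y^Z)^{-1}(z)=\Delta}\|z-\Delta\|^2$ and proves a two-sided Lipschitz bound $|g_\phi(X,Z)-g_\phi(X,Y)|\le 2d(Y,Z)^2+2MK\,d(Y,Z)$; you instead pull back an optimal $\psi^*:X\to Z$ to $\theta=(\phi_Y^Z)^{-1}\circ\psi^*$ and bound $c_{XY}(\theta)$. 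These are dual formulations of the same estimate. Your explicit flagging of the diagonal/multiplicity bookkeeping in points (i) and (ii) is a fair account of where the argument is delicate; the paper's $g_\phi$ formulation absorbs some of this bookkeeping into the definition (the extra sum over $z$ with $(\phi_Y^Z)^{-1}(z)=\Delta$) but is correspondingly less explicit about why every pairing $X\to Z$ arises this way.
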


\begin{proof}
Since $Y$ has only finitely many off-diagonal points there is some $\epsilon>0$ such that for every diagram $Z$ with $d(Y,Z)<\epsilon$ there is a unique geodesic from $Y$ to $Z$. 

For each bijection $\phi$ of points in $X$ to points in $Y$, define the function $g_\phi$ between $X$ and points in $B(Y,\epsilon)$ by setting
\begin{align*}
g_\phi(X,Z):=\sum_{x \in X} \|x - \phi_Y^Z(\phi(x))\|^2 +  \sum_{\{z\in Z: (\phi_Y^Z)^{-1}(z)= \Delta\}}\|z-\Delta\|^2,
\end{align*}
where $\phi_Y^Z$ is the optimal pairing that comes from the unique geodesic from $Y$ to $Z$. First note that $g_\phi(X,Z) \leq \sum_{x\in X} \|x - \phi_Y^Z(\phi(x))\|^2 + d(Y,Z)^2$. Since there are only finitely many points in $X$ and $Y$ there is a bound $M$ on $ \|x - \phi(x)\| + \epsilon$. $M$ is a bound on $\|x - \phi_Y^Z(\phi(x))\|$ for all $x$ and all $\phi$. We also know $\|\phi_Y^Z(\phi(x))-\phi(x)\| \leq d(Y,Z)$ for all $x \in X$. Let $K$ be the number of off-diagonal points in diagrams $X$ and $Y$ combined. 
\begin{align*}
g_\phi(X,Z)&\leq \sum \|x_i - \phi_Y^Z(\phi(x_i))\|^2+ d(Y,Z)^2,\\
&\leq \sum_{x\in X} (\|x-\phi(x)\| + \|\phi(x)-\phi_Y^Z(\phi(x))\|)^2+ d(Y,Z)^2,\\
&\leq \sum_{x\in X} (\|x-\phi(x)\|^2+ \|\phi(x)-\phi_Y^Z(\phi(x))\|^2\\
&\qquad +2 \|x-\phi(x)\| \|\phi(x)-\phi_Y^Z(\phi(x))\|)+ d(Y,Z)^2, \\
&\leq g_\phi(X,Y) + 2d(Y,Z)^2 + 2M d(Y,Z) \, K.
\end{align*}
Similarly 
$$g_\phi(X,Y)\leq g_\phi(X,Z)+ 2d(Y,Z)^2  + 2MK d(Z,Y).$$

Let $\phi_X^Y$ be the optimal pairing from $X$ to $Y$ which is assumed to be unique in the statement of the proposition. Let $\hat{\phi}$ be another bijection of points in $X$ to points in $Y$. Since there are only finitely many off-diagonal points in $X$ and $Y$ there are only finitely many possible $\hat\phi$.
Set 
$$\beta:= \min_{\hat{\phi}\neq \phi_X^Y}\left \{ g_{\hat{\phi}}(X,Y)-g_{\phi_X^Y}(X,Y)\right\} =  \min_{\hat{\phi}\neq \phi_X^Y}\left\{ g_{\hat{\phi}}(X,Y)-d(X,Y)^2\right \}$$
which must be positive as $\phi_X^Y$ is uniquely optimal by assumption.

Choose $r>0$ such that $4r^2+4MKr<\beta$. Now suppose that $g_\phi(Z,X)\leq g_{\phi_X^Y}(Z,X)$ for some $Z \in B(Y,r)$. This will imply that
\begin{align*}
g_\phi(X,Y) &\leq g_\phi(X,Z) + 2d(Y,Z)^2  + 2MK\,d(Z,Y),\\
&\leq g_{\phi_X^Y}(X,Y) +  4d(Y,Z)^2  +4MK \,d(Y,Z),\\
&<g_{\phi_X^Y}(X,Z) +\beta,
\end{align*}
which contradicts our choice of $\beta$.

Now suppose $X_1, X_2, \ldots, X_m$ and $Y$ are diagrams with finitely many off diagonal points such that there is a  unique optimal pairing $\phi_{X_i}^Y$ between $X_i$ and $Y$ for each $i$. By the above argument there are some $r_1, r_2,\ldots r_m>0$ such that for each $i$ and for every $Z \in B(Y,r_i)$ there is a unique optimal pairing between each $X_i$ and $Z$ and this optimal pairing is induced by the one from $X_i$ to $Y$. Take $r=\min\{r_i\}$ which is positive.
\end{proof}

The following theorem states that Algorithm \ref{findlocalmin} will
find a local minimum on termination.

\begin{thm}\label{localminres}
Given diagrams $\{X_1,...X_m\}$ and the corresponding Fr\'echet
function $F$, then $W = \{w_i\}$ is a local minimum of $F$ if and only
if there is a unique optimal pairing from $W$ to each of the $X_j$
denoted as $\phi_j$ and each $w_i$ is the arithmetic mean of the 
points $\{\phi_j(w_i)\}_{j=1, 2 \ldots m}$.
\end{thm}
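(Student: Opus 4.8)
The plan is to prove the two implications of the biconditional separately. The ``only if'' direction is already contained in Lemma \ref{lem:necessary}: a local minimum $W$ has a unique optimal pairing $\phi_j$ to each $X_j$ and each $w_i$ is the arithmetic mean of $\{\phi_j(w_i)\}_{j=1}^m$. So all the content is in the converse, and there I would show: if $W=\{w_i\}$ admits a unique optimal pairing $\phi_j:W\to X_j$ for every $j$ and each $w_i$ equals the (diagonal-weighted) arithmetic mean of $\{\phi_j(w_i)\}_j$, then there is a radius $r>0$ with $F(Z)\ge F(W)$ for all $Z\in B(W,r)$.

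First I would record two observations. Such a $W$ has only finitely many off-diagonal points: if some off-diagonal $w_i$ were matched to the diagonal by every $\phi_j$, then $\{\phi_j(w_i)\}_j$ would be $m$ copies of $\Delta$ and its arithmetic mean, hence $w_i$, would lie on $\Delta$; so each $w_i$ is matched off-diagonally by at least one $\phi_j$ and $|W|\le\sum_j|X_j|$. Also, uniqueness of each $\phi_j$ automatically supplies the equal-multiplicity hypothesis of Proposition \ref{prop:unique pairing}: if $w_k$ and $w_l$ were off-diagonal with $w_k=w_l$ but $\phi_j(w_k)\ne\phi_j(w_l)$, swapping their images would give a second optimal pairing. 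I would then reduce to the setting of Proposition \ref{prop:unique pairing} by deleting from each $X_j$ the finitely many off-diagonal points that $\phi_j$ sends to the diagonal of $W$, obtaining $\widehat X_j$; the restriction of $\phi_j$ is then the unique optimal pairing $W\to\widehat X_j$, now with no off-diagonal point of $\widehat X_j$ matched to $\Delta$, and the cost-comparison argument used to prove Proposition \ref{prop:unique pairing} shows that for $Z$ sufficiently near $W$ the deleted points remain matched to the diagonal in the optimal pairing $X_j\to Z$ as well. Applying Proposition \ref{prop:unique pairing} to $\widehat X_1,\dots,\widehat X_m$ and $W$ yields a single $r>0$ such that, for every $Z\in B(W,r)$, the optimal pairing $W\to Z$ is the unique geodesic one $\phi^Z_W$ and the optimal pairing $X_j\to Z$ is $\phi^Z_W\circ\phi_j^{-1}$ (with the deleted points carried to $\Delta$), whence
$$d(X_j,Z)^2=\sum_{w}\|\phi_j(w)-\phi^Z_W(w)\|^2+c_j+(\text{nonnegative terms}),$$
the sum ranging over off-diagonal points $w$ of $W$, the constant $c_j$ being the contribution of the deleted points, and the nonnegative terms coming from off-diagonal points of $Z$ matched to $\Delta$; at $Z=W$ this reads $d(X_j,W)^2=\sum_w\|\phi_j(w)-w\|^2+c_j$.

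The remaining step is convexity of the squared distance to a finite set in the plane. Subtracting the two displays, summing over $j$ and dividing by $m$,
$$F(Z)-F(W)\ge\frac1m\sum_{w}\Big(\sum_{j=1}^m\|\phi_j(w)-\phi^Z_W(w)\|^2-\sum_{j=1}^m\|\phi_j(w)-w\|^2\Big).$$
For each fixed off-diagonal $w$ the map $p\mapsto\sum_{j=1}^m\|\phi_j(w)-p\|^2$ on $\R^2$ (reading $\|\Delta-p\|$ as the distance of $p$ to the diagonal) is minimized exactly at the diagonal-weighted arithmetic mean of $\{\phi_j(w)\}_j$, which by hypothesis is $w$; hence every summand on the right is $\ge 0$ and $F(Z)\ge F(W)$, so $W$ is a local minimum.

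The step I expect to be the main obstacle is the bookkeeping around the diagonal: checking that deleting the $X_j$-points matched to $\Delta$ genuinely reduces the situation to one covered by Proposition \ref{prop:unique pairing}, that these points stay matched to $\Delta$ throughout $B(W,r)$, and that the composite $\phi^Z_W\circ\phi_j^{-1}$ together with the ``new'' off-diagonal points of $Z$ really realizes the optimal cost $d(X_j,Z)^2$ as displayed. The other ingredients --- finiteness of $W$, the swapping argument for equal multiplicities, and the arithmetic-mean/convexity step --- are routine.
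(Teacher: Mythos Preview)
Your proposal is correct and follows the same route as the paper: Lemma \ref{lem:necessary} for necessity, and for sufficiency, Proposition \ref{prop:unique pairing} provides a ball on which the optimal pairings to the $X_j$ are constant, whereupon $F$ reduces to a sum of Euclidean quadratics minimized at the arithmetic mean $W$. You are simply more careful than the paper about verifying the hypotheses of Proposition \ref{prop:unique pairing} (finiteness of $W$, the equal-multiplicity condition, and the $\widehat X_j$ reduction to handle off-diagonal points of $X_j$ matched to $\Delta$); the paper invokes the proposition directly without comment.
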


\begin{proof}
In Lemma \ref{lem:necessary} we showed that it it is a necessary condition. 

Given $m$ points in the plane or copies of the diagonal, $\{x_1, x_2, \ldots, x_m\}$, the choice of $y$ which minimizes $\sum_{i=1}^m \|x_i-y\|^2$ is the arithmetic mean of $\{x_1, \ldots, x_m\}$. As a result we know that $F(Z)>F(W)$ for all $Z$ with the same optimal pairings as $W$ to $X_1, X_2, \ldots, X_m$. Since there is some ball $B(W,r)$ such that every $Z\in B(W,r)$ has the same optimal pairings as $W$, by proposition \ref{prop:unique pairing}, we know that $F(Z)>F(W)$ for all $Z$ in $B(W,r)$. Thus we can conclude that $W$ is a local minimum.
\end{proof}

\section{Law of large numbers for the empirical Fr\'echet mean}

In this section we study the convergence of Fr\'echet means
computed from sampling sets to the set of means of a measure. 
Consider a measure $\rho$ on the space of persistence
diagrams $\D_{L^2}$. Given a set of persistence diagrams $\{X_i\}_{i=1}^n
\stackrel{iid}{\sim} \rho$ one can define an empirical
measure $\rho_n=\frac{1}{n}\sum_{k=1}^n \delta_{X_k}$. We will
examine the relation between the two sets
\begin{eqnarray*}
\mathbf{Y} &=& \left\{ \min_{Z \in \D_{L^2}} \left[ F := \int_{\D_{L^2}} d(X,Z)^2 d\rho(X) \right] \right\} , \\
\mathbf{Y}_n &=& \left\{ \min_{Z \in \D_{L^2}} \left[ F_n := \int_{\D_{L^2}} d(X,Z)^2 d\rho_n(X) \right] \right\},
\end{eqnarray*}
where $\mathbf{Y}$ and $\mathbf{Y}_n$ are the Fr\'echet mean sets of 
the measures $\rho$ and $\rho_n$ respectively. We would like
prove convergence of $\mathbf{Y}_n$ to $\mathbf{Y}$ asymptotically
with $n$ -- a law of large numbers result.

There exist weak and strong laws of large numbers for general metric
spaces (for example see
\cite{Molchanov}[Theorem 3.4]). These results hold for global minima
of the Fr\'echet and empirical Fr\'echet functions $F$ and $F_n$, respectively.
It is not clear to us how to adapt these results to the
case of Algorithm  \ref{findlocalmin} where we can only ensure convergence
to a local minimum. It is also not clear how we can adapt these
theorems to get rates of convergence of the sample Fr\'echet mean
set to the population quantity.

In this section we provide a law of large number result for the
restricted case where $\rho$ is a combination of Dirac masses
$$\rho=\frac1m\sum_{i=1}^m \delta_{Z_i},$$
where $Z_i$ are diagrams with only finitely many off diagonal points 
and we allow for multiplicity in these points. The proof is
constructive and we provide rates of convergence.

The main results of this section, Theorem \ref{thm:ifflocal}
and Lemma \ref{finmin}, provide a probabilistic justification
for Algorithm \ref{findlocalmin}. Theorem \ref{thm:ifflocal}
states that with high probability local minima of
the empirical Fr\'echet function $F_n$ will be close to
local minima of the Fr\'echet function $F$.
Ideally we would like the above convergence to hold for global minima, the 
Fr\'echet mean set. The condition of Lemma \ref{finmin} states that
the number of local minima of $F_n$ is finite and not a function
of $n$. This suggests that applying Algorithm \ref{findlocalmin} to a random set of start conditions can be used to explore
the finite set of local minima.

 \begin{thm}\label{thm:ifflocal}
Set $\rho=\frac1m\sum_{i=1}^m \delta_{Z_i}$ where $Z_i$ are diagrams 
with finitely many off diagonal points with multiplicity allowed. Let
$F$ be the Fr\'echet function corresponding to $\rho$ and 
$Y$ be a local minimum of $F$. Set $\{X_i\}_{i=1}^n \stackrel{iid}{\sim} \rho$, 
and denote the corresponding empirical measure $\rho_n=\frac1n\sum_{k=1}^n
\delta_{X_k}$ and Fr\'echet mean function $F_n$. There exists a local minimum $Y_n$ of $F_n$ such that with probability
greater than $1-\delta$
$$d(Y,Y_n)^2 \leq  \frac{m^2 F(Y)}{n} \ln\left( \frac{m}{\delta}\right),$$
 for $n \geq 8m \ln \frac{m}{\delta}$ and  $\frac{m^2 F(Y)}{n} \ln\left( \frac{m}{\delta}\right) < r^2$
 where $r$ characterizes the separation between the local minima of $F$.
 \end{thm}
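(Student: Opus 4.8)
The strategy is to run one iteration of Algorithm~\ref{findlocalmin} from $Y$ itself, show that the resulting diagram $Y_n$ is (with high probability) already a local minimum of $F_n$, and control $d(Y,Y_n)$ by a concentration estimate. Write the empirical measure as $\rho_n=\frac1n\sum_{i=1}^m\xi_i\delta_{Z_i}$, where $\xi_i=|\{k:X_k=Z_i\}|$, so that $(\xi_1,\dots,\xi_m)$ is multinomial$(n;\tfrac1m,\dots,\tfrac1m)$ and each $\xi_i$ is marginally Binomial$(n,\tfrac1m)$. Since $Y$ is a local minimum of $F$, Lemma~\ref{lem:necessary} provides the unique optimal pairings $\phi_Y^{Z_i}$ and the fact that equal off-diagonal points of $Y$ are sent to equal points (and, from the local-minimum property, no off-diagonal point of $Z_i$ is matched to the diagonal of $Y$, since the corresponding mean point would be off-diagonal), and Proposition~\ref{prop:unique pairing} then gives $r>0$ such that for every $Z\in B(Y,r)$ the optimal pairing between $Z_i$ and $Z$ is unique and equals $\phi_Y^Z\circ\phi_{Z_i}^Y$. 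Writing $y_j$ for the off-diagonal points of $Y$ and $z_i^j:=\phi_Y^{Z_i}(y_j)$, Lemma~\ref{lem:necessary} also gives $y_j=\frac1m\sum_{i=1}^m z_i^j$; one iteration of the algorithm from $Y$ replaces each $y_j$ by the mean of its $n$ partners, $Y_n^j:=\frac1n\sum_{i=1}^m\xi_i z_i^j$, producing $Y_n=\{Y_n^j\}$.

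The first point is that $Y_n$ is a genuine local minimum of $F_n$ as soon as $Y_n\in B(Y,r)$: by Proposition~\ref{prop:unique pairing} the optimal pairing from each $Z_i$ (hence from each copy $X_k=Z_i$) to $Y_n$ is unique and sends $z_i^j$ to $Y_n^j$, while $Y_n^j$ is by construction the arithmetic mean of the points paired to it, so Theorem~\ref{localminres} applies. It therefore suffices to show $d(Y,Y_n)^2\le\frac{m^2F(Y)}{n}\ln(m/\delta)$ with probability at least $1-\delta$; under the standing hypothesis this quantity is $<r^2$, which places $Y_n$ in $B(Y,r)$.

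For the deterministic part, pair $y_j$ with $Y_n^j$ (and the diagonal with the diagonal) to obtain
$$d(Y,Y_n)^2\le\sum_j\big\|y_j-Y_n^j\big\|^2=\frac1{n^2}\sum_j\Big\|\sum_{i=1}^m\big(\xi_i-\tfrac nm\big)z_i^j\Big\|^2 .$$
Because $\sum_i(\xi_i-\tfrac nm)=0$ and $\sum_i(z_i^j-y_j)=0$ (the local-minimum property), the inner vector is unchanged if $z_i^j$ is replaced by $z_i^j-y_j$; bounding $\big\|\sum_i(\xi_i-\tfrac nm)(z_i^j-y_j)\big\|\le(\max_i|\xi_i-\tfrac nm|)\sum_i\|z_i^j-y_j\|$, applying Cauchy--Schwarz in $i$, summing over $j$, and using $\sum_j\|z_i^j-y_j\|^2=d(Z_i,Y)^2$ together with $\sum_{i=1}^m d(Z_i,Y)^2=mF(Y)$, gives
$$d(Y,Y_n)^2\le\frac{m^2F(Y)}{n^2}\,\Big(\max_{1\le i\le m}\big|\xi_i-\tfrac nm\big|\Big)^2 .$$
(This is the skeleton behind the first-moment bound $\E[d(Y,Y_n)^2]\le F(Y)/n$ that one also gets using the multinomial covariances; here an exponential tail is needed in place of a mean.)

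It remains to bound $\max_i|\xi_i-\tfrac nm|$. A Chernoff bound for each Binomial$(n,\tfrac1m)$ variable, valid with the claimed exponential form once $n\ge 8m\ln(m/\delta)$ confines the relevant deviations to its sub-Gaussian range, gives $\Pr(|\xi_i-\tfrac nm|>t)\le\delta/m$ for $t^2\asymp n\ln(m/\delta)$; a union bound over $i=1,\dots,m$ then yields $\max_i|\xi_i-\tfrac nm|^2\le n\ln(m/\delta)$ with probability at least $1-\delta$, and substituting into the last display gives $d(Y,Y_n)^2\le\frac{m^2F(Y)}{n}\ln(m/\delta)$; on this event the bound is $<r^2$, so $Y_n\in B(Y,r)$ and, by the first step, $Y_n$ is a local minimum of $F_n$. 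The main obstacles are the diagonal bookkeeping --- checking that the hypotheses of Proposition~\ref{prop:unique pairing} (uniqueness, no off-diagonal point matched to $\Delta$, equal points to equal points) really propagate among $Y$, the $Z_i$ and $Y_n$, and that at points of $Y$ with diagonal partners the update is interpreted via the foot-of-perpendicular conventions of the algorithm section rather than as a naive arithmetic mean --- and confirming that the $Y_n$ we select is an actual fixed point of the iteration, not a transient one. One should also note that the Cauchy--Schwarz step costs a factor $m$, which is the source of the $m^2$ in the final estimate.
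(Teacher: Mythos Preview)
Your overall strategy --- write $\rho_n$ via multinomial counts $\xi_i$, concentrate $\max_i|\xi_i-n/m|$ by Hoeffding and a union bound, set $Y_n$ to be one step of Algorithm~\ref{findlocalmin} from $Y$, bound $d(Y,Y_n)$ deterministically in terms of the deviations, and invoke Proposition~\ref{prop:unique pairing} with Theorem~\ref{localminres} --- is exactly the paper's. The gap is in the deterministic step when some partner $z_i^j$ is the diagonal. Your formula $Y_n^j=\frac1n\sum_i\xi_iz_i^j$ and the ensuing Cauchy--Schwarz bound presuppose that every $z_i^j$ is an honest point of $\R^2$; when $z_i^j=\Delta$, the algorithm's ``mean'' operation (defined just before Algorithm~\ref{findlocalmin}) first averages the off-diagonal partners and only then mixes in the foot of perpendicular from \emph{that} average, so the component parallel to the diagonal is normalized by $\xi_1+\cdots+\xi_k$ rather than by $n$. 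The na\"{\i}ve mean you bound is therefore not a fixed point of the iteration, and Theorem~\ref{localminres} does not apply to it. You correctly flag this obstacle at the end, but your displayed inequality does not survive the correction as written.

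The paper closes this gap by passing to the rotated basis $\tfrac1{\sqrt2}(1,1),\ \tfrac1{\sqrt2}(-1,1)$, writing $z_i^j-y_j=(a_i^j,b_i^j)$ in those coordinates (so $a_i^j=0$ whenever $z_i^j=\Delta$), and bounding the two components separately: the perpendicular component behaves as in your sketch, while the parallel component carries a factor $(\xi_1+\cdots+\xi_k)^{-1}\le m/(k(1-\epsilon)n)$, which is the source of the extra $(1-\epsilon)^{-2}$ in the intermediate bound $d(Y,Y_n)^2\le \frac{m\epsilon^2}{(1-\epsilon)^2}F(Y)$. The hypothesis $n\ge 8m\ln(m/\delta)$ is there precisely to force $\epsilon\le\tfrac14$ and hence $(1-\epsilon)^{-2}<2$. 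Once you insert this rotated-basis computation in place of the na\"{\i}ve mean, your sketch becomes the paper's proof.
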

\begin{proof}
The empirical distribution is 
$$\rho_n=\frac1n\sum_{k=1}^n \delta_{X_k} = \frac1m\sum_{i=1}^{m} \xi_i \delta_{Z_i}$$ 
where $\xi_i$ is the random variable that states the multiplicity of each $Z_i$ appearing in the empirical measure, $|\{k:X_k=Z_i\}|$. 
Observe that $\xi_1, \xi_2, \ldots, \xi_m$ can be stated as a multinomial distribution with parameters $n$ and $p=\left(\frac1m,  \frac1m, \ldots, \frac1m\right)$.

We will bound the probability that $|\xi_i-\frac{n}{m}|>\epsilon\frac{n}{m}$ for any $i=1, 2, \ldots m$. We then will show that under the assumption that $|\xi_i-\frac{n}{m}|\le \epsilon\frac{n}{m}$ for all $i=1, 2, \ldots m$ for sufficiently small $\epsilon>0$ there is a local minimal $Y_n$ with $d(Y,Y_n)^2<\frac{\epsilon^2 m F(Y)}{(1-\epsilon)^2}$.

For each $i$, $\xi_i \sim \mbox{Bin}(n,\frac{1}{m})$ and  $n-\xi_i \sim \mbox{Bin}(n, 1-\frac{1}{m})$. Using Hoeffding's inequality we obtain
$\Pr\left[\xi_i-\frac{n}{m}\le -\epsilon\frac{n}{m}\right]\le \frac{1}{2}\exp(-2\frac{\epsilon^2 n}{m})$
and
\begin{align*}
\Pr\left[\xi_i-\frac{n}{m}\ge \epsilon\frac{n}{m}\right]=\Pr\left[(n-\xi_i)-(n-\frac{n}{m})\le-\epsilon\frac{n}{m}\right]\le \frac{1}{2}\exp\left(-2\frac{\epsilon^2 n}{m}\right)
\end{align*}
Together they show that $\Pr\left[|\xi_i-\frac{n}{m}|\ge \epsilon\frac{n}{m}\right] \le \exp(-2\frac{\epsilon^2 n}{m})$ implying the bound $$\Pr\left[|\xi_i-\frac{n}{m}|<\epsilon\frac{n}{m} \text{ for all } i=1, 2, \ldots, m\right] \ge 1-m\exp\left(-2\frac{\epsilon^2 n}{m}\right).$$

From now on we will assume that $|\xi_i-\frac{n}{m}|<\epsilon\frac{n}{m}$ for all $i=1, 2, \ldots, m$. Let us consider our algorithm for finding a local minimal of $F_n$ starting at the point $Y$. We first define some notation. We denote the
points in $Y$ by $\{y_j\}_{j=1}^J$. We denote by $z_i^j := \phi_Y^{Z_i}(y_j)$ the point in  $Z_i$ that $y_j$ is paired to in the (unique) optimal bijection between $Y$ and $Z_i$. Recall that the $z_i^j$ could be the diagonal but from our assumption that $Y$ is a local minimum no off diagonal point in any $Z_i$ is paired with the diagonal in $Y$.

Let $(a_i^j, b_i^j)$ be the coefficients of the vector from $y_j$ to $z_i^j$ in the basis of $\R^2$ given by $(\frac{1}{\sqrt{2}},\frac{1}{\sqrt{2}})$ and $(-\frac{1}{\sqrt{2}},\frac{1}{\sqrt{2}})$. This basis has the advantage that when $z_i^j$ is the diagonal then $a_i^j=0$ and $b_i^j=d(y_j,\Delta)$. From our assumption that $Y$ is a local minimum we know that $\sum_{i=l}^m a_i^j=0$ and $\sum_{i=l}^m b_i^j=0$ for all $j$ and 
$$F(Y)=\frac{1}{m}\sum_{j=1}^J\sum_{i=1}^m ((a_i^j)^2 +  (b_i^j)^2).$$

For the moment fix $j$. Without loss of generality reorder the $Z_i$ so that the first $k$ (with $1\leq k \leq m$) of the $z_i^j$ are off the diagonal and the remained are copies of the diagonal. Let $y_j^n$ be the point in $\R^2$ given by 
$$y_j + \left(\frac{1}{\xi_1 +\xi_2 + \ldots \xi_k} \sum_{i=1}^k \xi_i a_i\right) \left(\frac{1}{\sqrt{2}},\frac{1}{\sqrt{2}}\right)
+ \left(\frac{1}{n}\sum_{i=1}^m \xi_i b_i^j\right) \left(-\frac{1}{\sqrt{2}},\frac{1}{\sqrt{2}}\right).$$
By construction this $y_j^n$ is the weighted arithmetic mean of the $z_i^j$ where we have weighted by the $\xi_i$ taking into account that when $i>k$ then $z_i^j$ is the diagonal.

Under our assumption that $|\xi_i-\frac{n}{m}|<\epsilon\frac{n}{m}$ for all $i=1, 2, \ldots, m$ and using $\sum_{i=1}^k a_i^j=0=\sum_{i=1}^m b_i^j$ we know that
\begin{align*}
\|y_j-y_j^n\|^2 &= \frac{1}{(\xi_1 +\xi_2 + \ldots \xi_k)^2}\left( \sum_{i=1}^k \xi_i a_i^j\right)^2 + \frac{1}{n^2}\left(\sum_{i=1}^m \xi_ib_i^j\right)^2\\
&= \frac{1}{(\xi_1 +\xi_2 + \ldots \xi_k)^2}\left( \sum_{i=1}^k (\xi_i-\frac{n}{m}) a_i^j\right)^2 + \frac{1}{n^2}\left(\sum_{i=1}^m (\xi_i-\frac{n}{m})b_i^j\right)^2\\
&\leq \frac{1}{\frac{k^2}{m^2}n^2(1-\epsilon)^2}\frac{\epsilon^2n^2}{m^2}\left( \sum_{i=1}^k (a_i^j)^2\right) + \frac{1}{n^2}\frac{\epsilon^2n^2}{m^2}\left(\sum_{i=1}^m (b_i^j)^2\right)\\
&\le \frac{m\epsilon^2}{(1-\epsilon)^2}\left( \frac{1}{m} \sum_{i=1}^m (a_i^j)^2 + (b_i^j)^2\right).
\end{align*}

Set $Y_n$ to be the diagram with off-diagonal points $\{y_j^n\}_{j=1}^J$. Using the pairing between $Y$ and $Y_n$ where we pair $y_j$ with $y_j^n$ we conclude that
\begin{align*}
d(Y,Y_n)&\leq \sum_{j=1}^J \|y_j-y_j^n\|^2\\
& \leq  \sum_{j=1}^J \frac{m\epsilon^2}{(1-\epsilon)^2}\left( \frac{1}{m} \sum_{i=1}^m (a_i^j)^2 + (b_i^j)^2\right)\\
&\le \frac{m\epsilon^2}{(1-\epsilon)^2}F(Y).
\end{align*}
Set $\delta = m\exp\left(-2\frac{\epsilon^2 n}{m}\right)$ and solve for $\epsilon$. This provides the bound that with probability greater than $1-\delta$
$$d(Y,Y_n)^2 \leq  \frac{m^2 F(Y)}{ 2 n} \ln\left( \frac{m}{\delta}\right) \frac{1}{(1-\epsilon)^2}.$$
For $\epsilon \in [0,.25]$ it holds that $(1-\epsilon)^{-2} <2$ and $n \geq 8m \ln \frac{m}{\delta}$ implies $\epsilon < .25$.

We want to show that $Y_n$ is a local minimum for sufficiently small $\epsilon$. Indeed it will be the output of Algorithm \ref{findlocalmin} given the initializing diagram of $Y$. Since $Y$ is a local minimum, Proposition \ref{prop:unique pairing} implies that there is a ball around $Y$, $B(Y,r)$, such that for every diagram in $B(Y,r)$ there is a unique optimal pairing with each $Z_i$ which corresponds to the unique optimal pairing between $Y$ and $Z_i$. That is $\phi_X^{Z_i} = \phi_X^Y\circ \phi_Y^{Z_i}$ for all $X\in B(Y,r)$. For $\epsilon>0$ such that $\frac{\epsilon^2 m F(Y)}{(1-\epsilon)^2}<r^2$ we have $Y_n\in B(Y,r)$. Plugging in for $\epsilon$ results in
 $\frac{m^2 F(Y)}{n} \ln\left( \frac{m}{\delta}\right) < r^2$.

This implies that $\phi_{Y_n}^{Z_i} = \phi_{Y_n}^Y\circ \phi_Y^{Z_i}$ is the unique optimal pairing between $Y_n$ and $Z_i$ for all $i$ and hence $\phi_{Y_n}^{X_k} = \phi_{Y_n}^Y\circ \phi_Y^{X_k}$ for each of the sample diagrams $X_k$. If $X_k=Z_i$ then 
$$\phi_{Y_n}^{X_k}(y_j^n)=\phi_{Y_n}^Y\circ \phi_Y^{Z_i}(y_j^n) = \phi_Y^{Z_i}(y_j)=z_i^j.$$
By construction $y_j^n$ is the weighted arithmetic mean of the $z_i^j$ (weighted by the $\xi_i$), and hence $y_j^n$ is the arithmetic mean of the $x_k^j$. By Theorem \ref{localminres} $Y_n$ is local minimum.
\end{proof}
 
The above theorem provides a (weak) law of large numbers results for the local minima computed from $n$ persistence diagrams but it does not ensure that
the number of local minima is bounded as $n$ goes to infinity. The utility of such a convergence result would be limited if the number of local minima could not
be bounded. The following lemma states that the number of local minima is bounded.
  
\begin{lem}\label{finmin}
Let $\rho=\frac1m\sum_{i=1}^m \delta_{Z_i}$ as before. Let $\rho_n=\frac1n\sum_{k=1}^n \delta_{X_k}$ be the empirical measure of $n$ points drawn
iid from $\rho$ and $F_n$ is the corresponding Fr\'echet function. The number of local minima of $F_n$ is bounded by $\prod_{i=1}^m(k_i+1)^{(k_1+k_2+\ldots k_m)}$. Here $k_i$ is the number of off-diagonal points in the $i$-th diagram. This bound is independent of $n$. 
\end{lem}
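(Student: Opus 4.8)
The plan is to combine the characterization of local minima in Theorem~\ref{localminres} with a counting argument: every local minimum of $F_n$ is rigidly determined by a finite combinatorial ``matching pattern'', and the number of such patterns is at most $\bigl(\prod_{i=1}^m(k_i+1)\bigr)^{K}$ with $K:=k_1+\cdots+k_m$.

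First I would rewrite the data. Since each $X_k$ equals one of the $Z_i$, put $\rho_n=\frac1m\sum_{i=1}^m\xi_i\delta_{Z_i}$ with $\xi_i=|\{k:X_k=Z_i\}|$. If $W=\{w_j\}$ is a local minimum of $F_n$, then by Theorem~\ref{localminres} (applied to the sampled list $\{X_1,\dots,X_n\}$) there is a \emph{unique} optimal pairing from $W$ to each $X_k$; since $X_k=X_{k'}=Z_i$ forces these to coincide as a pairing from $W$ to $Z_i$, we obtain a well-defined optimal pairing $\phi_i\colon W\to Z_i$ for each $i$, and each off-diagonal $w_j$ is the arithmetic mean of $\{\phi_k(w_j)\}_{k=1}^n$, equivalently the $(\xi_i)$-weighted arithmetic mean of the tuple $\bigl(\phi_1(w_j),\dots,\phi_m(w_j)\bigr)$, each entry lying in $Z_i\cup\{\diag\}$. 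By the explicit formula for the arithmetic mean of a mixture of off-diagonal points and copies of the diagonal, this weighted mean is a function of the tuple and the (fixed) weights $\xi_i$ alone; hence, for a fixed sample, $W$ is completely recovered from the family of tuples $\{(\phi_i(w_j))_{i=1}^m\}_j$.

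Next I would bound $|W|$. If some off-diagonal $w_j$ had $\phi_i(w_j)=\diag$ for every $i$, it would be a weighted mean of copies of the diagonal and hence lie on $\diag$, a contradiction; so each off-diagonal point of $W$ is matched by some $\phi_i$ to an off-diagonal point of $Z_i$. Assigning to each off-diagonal $w_j$ the least such index partitions the off-diagonal points of $W$ into sets $G_1,\dots,G_m$, and since $\phi_i$ is injective and sends $G_i$ into the $k_i$ off-diagonal points of $Z_i$ (counted with multiplicity), $|G_i|\le k_i$ and therefore $|W|=\sum_i|G_i|\le K$; in particular $W$ is automatically a finite diagram, so the argument never uses an infinite diagram. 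Finally, to count, fix a canonical order on the at most $K$ off-diagonal points of $W$ (say lexicographic on coordinates), pad with $K-|W|$ extra rows all of whose entries are $\diag$, and record for each $j\in\{1,\dots,K\}$ and $i\in\{1,\dots,m\}$ the value $\phi_i(w_j)\in Z_i\cup\{\diag\}$. This defines an injection from the set of local minima of $F_n$ into $\prod_{j=1}^{K}\prod_{i=1}^{m}\bigl(Z_i\cup\{\diag\}\bigr)$: the genuine rows are exactly those not entirely equal to $\diag$, and the weighted-mean formula reconstructs each $w_j$ from its row. Each factor has $k_i+1$ elements, so the number of local minima is at most $\prod_{i=1}^m(k_i+1)^{K}$, a bound that does not involve $n$.

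The step I expect to be most delicate is the rigidity claim that a local minimum is determined by its matching pattern: it rests on the uniqueness of optimal pairings from Theorem~\ref{localminres} (so repeated samples $X_k$ cannot contribute conflicting pairings), on a careful treatment of the weighted arithmetic mean of diagonal/off-diagonal mixtures so that it genuinely is a function of the recorded tuple, and on the canonical padding so that the total collapses to exactly $\prod_i(k_i+1)^K$ rather than a sum over the possible values of $|W|$. The bound $|W|\le K$ is the other point needing care; the remainder is bookkeeping.
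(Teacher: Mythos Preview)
Your proposal is correct and follows essentially the same approach as the paper: both arguments use the local-minimum characterization (Theorem~\ref{localminres}) to collapse the $n$ pairings down to $m$ unique pairings $\phi_i\colon W\to Z_i$, bound the number of off-diagonal points of $W$ by $K=k_1+\cdots+k_m$, and then count the at most $\prod_i(k_i+1)$ possible matching tuples per point. Your write-up is considerably more careful than the paper's (in particular the explicit injectivity argument via $|G_i|\le k_i$ and the padding device), but the underlying strategy is the same.
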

\begin{proof}
Set $Y_n$ as a local minimum of $F_n$. This implies there are unique optimal pairings $\phi_i$ between  $Y_n$ and $X_i$ for each $i$ and that any point $y$ in $Y_n$ is the arithmetic mean  of $\{\phi_i(y)\}$. Since the optimal pairing is unique, if $X_i=X_j$ then $\phi_i=\phi_j$. This in turn means that the $\phi_i$ are determined by which of $Z_i$ are in the set $X_j$ (with multiplicity). This implies that the number of local minima is bounded by the number of different partitions into subsets of the points in the $\cup X_j$ so that each subset has exactly one point from each of the $X_j$.  The number of subsets is bounded by $k_1+k_2+\ldots +k_m$ and for each subset there is a bound of $\prod_{i=1}^m(k_i+1)$ on the choices of which element to take from each of the $X_i$.  Thus the number of different partitions is bounded by $\prod_{i=1}^m(k_i+1)^{(k_1+k_2+\ldots k_m)}$. 
\end{proof}

We would like to discuss not only the convergence of local minima but also the convergence of the Fr\'echet means. We can do this in the case when there is a unique Fr\'echet mean.

\begin{lem}
Let $\rho=\frac1m\sum_{i=1}^m \delta_{Z_i}$ as before. Suppose further that the corresponding Fr\'echet function $F$ has a unique minimum. Let $\rho_n=\frac1n\sum_{k=1}^n \delta_{X_k}$ be the empirical measure of $n$ points drawn
iid from $\rho$ and $F_n$ is the corresponding Fr\'echet function. Let $\mathbf{Y}$ be the Fr\'echet mean of $F$ and $\mathbf{Y}_n$ the set of Fr\'echet means of $F_n$. With probability $1$ the Hausdorff distance between $\mathbf{Y}_n$ and $\mathbf{Y}$ goes to zero as $n$ goes to infinity.
\end{lem}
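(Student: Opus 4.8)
The plan is to prove this as a strong law of large numbers for argmin sets: the empirical Fr\'echet functions $F_n$ converge to $F$ uniformly on the relevant region, their minimizers all live in a common compact set, and hence any limit point of the $\mathbf Y_n$ must minimize $F$ and so equal $Y^\ast$ by the uniqueness hypothesis. Concretely, I would first write $\rho_n=\frac1m\sum_{i=1}^m \xi_i\delta_{Z_i}$ with $(\xi_1,\dots,\xi_m)\sim\mathrm{Multinomial}(n;\tfrac1m,\dots,\tfrac1m)$, observe that by the ordinary strong law of large numbers $\xi_i/n\to 1/m$ a.s.\ for every $i$, and fix the probability-one event $\Omega_0$ on which all of these limits hold; everything below is then a deterministic statement along a sequence. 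On $\Omega_0$, for any bounded $U\subset\D_{L^2}$,
\[
\sup_{Z\in U}|F_n(Z)-F(Z)| = \sup_{Z\in U}\Big|\sum_{i=1}^m\big(\tfrac{\xi_i}{n}-\tfrac1m\big)d(Z_i,Z)^2\Big| \le \Big(\max_i\sup_{Z\in U}d(Z_i,Z)^2\Big)\sum_{i=1}^m\big|\tfrac{\xi_i}{n}-\tfrac1m\big| \longrightarrow 0,
\]
so $F_n\to F$ uniformly on bounded sets. Since $\rho_n$ has finite support and finite second moment, $\mathbf Y_n\neq\emptyset$, and because $\mathbf Y=\{Y^\ast\}$ is a single point, $d_H(\mathbf Y_n,\mathbf Y)=\sup_{Y_n\in\mathbf Y_n}d(Y_n,Y^\ast)$.

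Next I would produce a sequentially compact set $\mathcal K\subset\D_{L^2}$ containing $Y^\ast$ and every $\mathbf Y_n$. Since a global minimum is a local minimum, Theorem \ref{localminres} applies: each $Y_n\in\mathbf Y_n$ arises from a partition of the off-diagonal points of the distinct diagrams occurring among $X_1,\dots,X_n$ (all of which lie in $\{Z_1,\dots,Z_m\}$), with each block replaced by its arithmetic mean. Hence $Y_n$ has at most $N:=\sum_i k_i$ off-diagonal points, each lying in the fixed compact convex set $\mathcal C\subset\R^2$ spanned by the off-diagonal points of $Z_1,\dots,Z_m$ together with their projections to $\Delta$; the same bound and containment hold for $Y^\ast$. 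The family of persistence diagrams with at most $N$ off-diagonal points all lying in $\mathcal C$ is sequentially compact in $(\D_{L^2},d)$: from such a sequence one extracts a convergent subsequence of the point configurations in $\mathcal C^{N}$ (allowing points to coalesce or to approach $\Delta$, in which case they drop out of the limit diagram) and checks that $L^2$-Wasserstein convergence follows. Take $\mathcal K$ to be this set.

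Then I would argue by contradiction on $\Omega_0$. If $d_H(\mathbf Y_n,\mathbf Y)\not\to 0$, there are $\epsilon>0$, a subsequence $(n_j)$, and $Y_{n_j}\in\mathbf Y_{n_j}$ with $d(Y_{n_j},Y^\ast)\ge\epsilon$; by compactness of $\mathcal K$ pass to a further subsequence with $Y_{n_j}\to\tilde Y\in\mathcal K$, so $\tilde Y\neq Y^\ast$. Because $F$ is continuous (indeed Lipschitz on bounded sets, as in the proof of Proposition \ref{prop:semiconcave}), $F(Y_{n_j})\to F(\tilde Y)$, and together with $\sup_{Z\in\mathcal K}|F_{n_j}(Z)-F(Z)|\to 0$ this gives $F_{n_j}(Y_{n_j})\to F(\tilde Y)$. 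Since $Y_{n_j}$ minimizes $F_{n_j}$, $F_{n_j}(Y_{n_j})\le F_{n_j}(Y^\ast)=\sum_i\frac{\xi_i}{n_j}d(Z_i,Y^\ast)^2\to F(Y^\ast)$. Passing to the limit, $F(\tilde Y)\le F(Y^\ast)=\min F$, so $\tilde Y$ is a Fr\'echet mean of $F$; by the uniqueness hypothesis $\tilde Y=Y^\ast$, contradicting $d(\tilde Y,Y^\ast)\ge\epsilon$. Hence $d_H(\mathbf Y_n,\mathbf Y)\to 0$ on $\Omega_0$, an event of probability one.

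The hard part will be the compactness claim in the second paragraph: $\D_{L^2}$ is not locally compact (e.g.\ single-point diagrams at perpendicular distance $1$ from $\Delta$ with unbounded birth coordinate form a bounded set with no convergent subsequence), so bounded sets are in general not precompact, and the argument must genuinely use the finite-dimensional nature of the Fr\'echet-mean set supplied by Theorem \ref{localminres} (boundedly many off-diagonal points confined to a fixed bounded region). Verifying that convergence of the underlying point configurations in $\R^2$ — especially in the degenerate cases where points merge or limit onto the diagonal — forces convergence in the $L^2$-Wasserstein metric is the one genuinely fiddly ingredient; the remaining pieces (the SLLN, the uniform convergence of $F_n$, continuity of $F$, and nonemptiness of $\mathbf Y_n$) are routine or already established in the paper.
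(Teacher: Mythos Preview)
Your argument is correct and follows the same overall strategy as the paper: argue by contradiction, use compactness of the relevant set of minimizers to extract a convergent subsequence, and then pass to the limit in the inequality $F_n(Y_n)\le F_n(Y^\ast)$ to contradict uniqueness of the Fr\'echet mean.

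The differences lie in how two intermediate steps are handled. For compactness, the paper simply asserts that the set $\{W_{n_k}\}$ is bounded, off-diagonally birth-death bounded and uniform, and invokes the abstract precompactness criterion (Theorem~21 of \cite{MilMukHar:2012}); you instead invoke Theorem~\ref{localminres} to see that every empirical Fr\'echet mean has at most $N=\sum_i k_i$ off-diagonal points lying in a fixed compact convex subset of $\R^2$, reducing to a finite-dimensional compactness. Your route is more concrete and self-contained, and also makes transparent \emph{why} the compactness holds (something the paper leaves implicit). For the convergence step, the paper combines pointwise SLLN convergence $F_n(W)\to F(W)$ with the Lipschitz bound on $F_n$ from the proof of Proposition~\ref{prop:semiconcave}; you instead establish uniform convergence $\sup_{\mathcal K}|F_n-F|\to 0$ directly from the multinomial representation and the ordinary SLLN for $\xi_i/n$, which is slightly cleaner and avoids tracking the separate $\epsilon$'s in the paper's chain of inequalities. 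Both approaches arrive at the same contradiction $F(\tilde Y)\le F(Y^\ast)$ with $\tilde Y\neq Y^\ast$.
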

\begin{proof}
It is sufficient for us to show for each $r>0$ that with probability $1$ there is some $N_r$ such that  $\mathbf{Y}_n\subset B( \mathbf{Y},r)$ for all $n>N_r$. 

Fix $r>0$. Suppose  there does not exist some $N_r$ such that  $\mathbf{Y}_n\subset B( \mathbf{Y},r)$ for all $n>N_r$. Then there is some sequence of $W_{n_k}\in \mathbf{Y}_{n_k}$ such that $d(W_{n_k}, \mathbf{Y})\ge r$. The set $\{W_{n_k}\}$ is clearly bounded, off-diagonally birth-death bounded and  uniform and hence precompact.  This implies that $(W_{n_k})$ has a convergent subsequence $(W_{{n_k}_j})$. Let $W$ denote the limit of this sequence. Since $d(W_{{n_k}_j}, \mathbf{Y})\ge r$ for all $j$ we have $d(W, \mathbf{Y})\ge r$.

By the arguments in Proposition \ref{prop:semiconcave} there is some $K$ independent of $n$ such that $F_n$ is $K$-Lipschitz in $B(W,1)$ and hence $|F_{{n_k}_j}(W_{{n_k}_j}) - F_{{n_k}_j}(W)|\leq K d(W_{{n_k}_j},W)$ for large $j$. Hence, for all $\epsilon>0$ we can say that $ F_{{n_k}_j}(W)\leq F_{{n_k}_j}(W_{{n_k}_j}) + \epsilon$ for sufficiently large $j$. 

The law of large numbers tells us that $F_n(W)\to F(W)$ and $F_n(\mathbf{Y})\to F(\mathbf{Y})$ as $n \to \infty$ with probability $1$. Hence for all $\epsilon>0$ we know that with probability $1$ both $F(W)\leq F_n(W) +\epsilon$ and  $F_n(\mathbf{Y})\leq F(\mathbf{Y}) +\epsilon$ for sufficiently large $n$.

From our assumption that $W_{{n_k}_j}$ is a Fr\'echet mean of $F_{{n_k}_j}$ we know that $F_{{n_k}_j}(W_{{n_k}_j})\leq F_{{n_k}_j}(\mathbf{Y})$ for all $j$.

Let $\epsilon>0$. Combining the inequalities above we conclude that with probability $1$ 
\begin{align*}
F(W)\leq  F_{{n_k}_j}(W) + \epsilon 
\leq  F_{{n_k}_j}(W_{{n_k}_j}) + 2\epsilon
\leq F_{{n_k}_j}(\mathbf{Y})+2\epsilon
\leq F(\mathbf{Y}) + 3\epsilon,
\end{align*}
for $j$ sufficiently large. Since $\epsilon>0$ was arbitrary we obtain $F(W)\leq F(\mathbf{Y})$ which contradicts the uniqueness assumption about the Fr\'echet mean.
\end{proof}

\section{Persistence diagrams of random Gaussian fields}
We illustrate the utility of our algorithm in computing means and
variances of persistence diagrams in this section via simulation.
The idea will be to show that persistence diagrams generated from
a random Gaussian field will concentrate around the diagonal with
the mean diagram moving closer to the diagonal as the number
of diagrams averaged increases.

The persistence diagrams were computed from random Gaussian field
over the unit square using the procedure outlined in Section
3 in \cite{ABBSW}. The field generated is a stationary, isotropic, and 
infinitely differentiable random field. The Gaussian was set to be
mean zero and the covariance function was $R(p) = \exp(-\alpha \|p
\|^2)$ where $\alpha = 100$. A few hundred levels in the range of
the realization of the field were taken for each level a simplicial
complex was constructed. This was done by taking a fine grid on 
the unit square and including any vertex, edge or square in the 
complex if and only if the values of the field at the vertex
or set of vertices (for the edge and square cases) were higher
than the level. The complex increases as the level decreases which
provides the filtering and from which birth-death values of the
diagram were computed. We obtained from E. Subag $10,000$ such random 
persistence diagrams generated as described above.
These diagrams contain points with infinite persistence, we ignore these points.
Using extended persistence in computing the diagrams would
address this issue.

\begin{figure}[hbt]
\begin{center}
\includegraphics[height=3.5in]{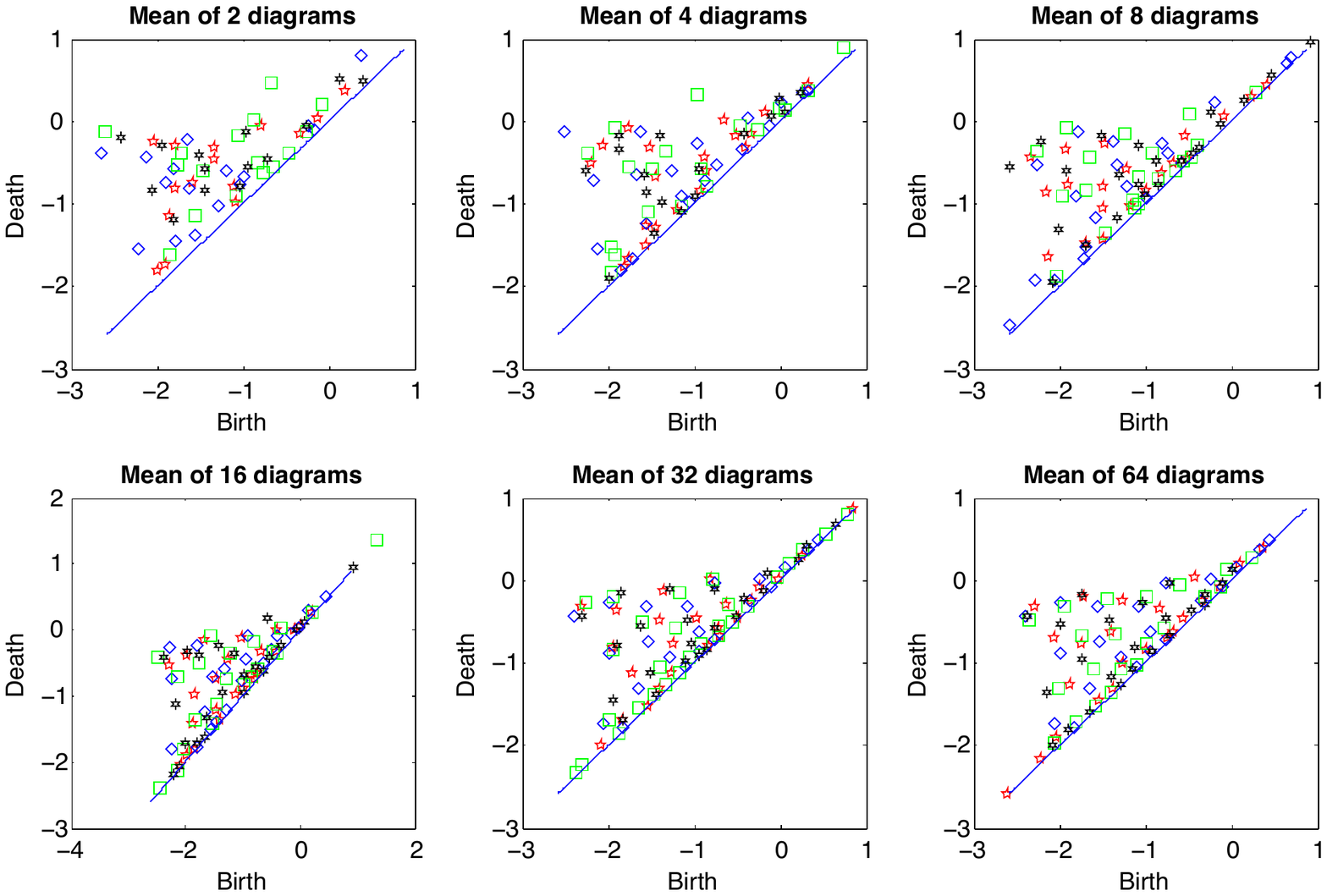}\\
\includegraphics[height=3.5in]{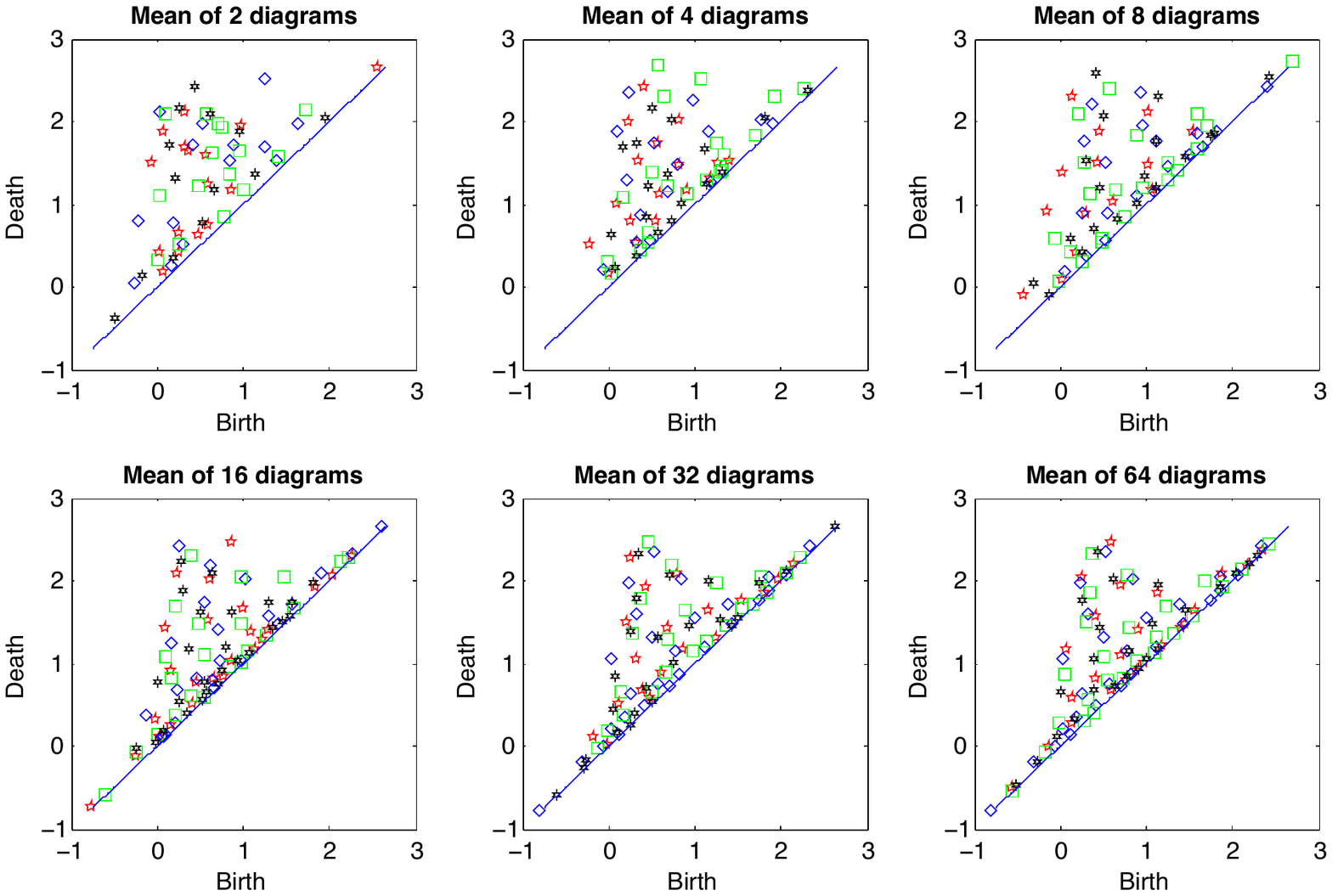} 
\caption{\label{concentration}
The top two rows plot the mean persistence diagram for dimension zero.
Each figure contains four means computed from the number of diagrams
specified in the figure title. Each mean is computed from a different
random sample of diagrams and is plotted in a different color. The 
bottom two rows are the sample plots for dimension one.
}
\end{center}
\end{figure}

\clearpage

In Figure \ref{concentration} we display the mean diagram of sets of
$2, 4, 8, 16, 32, 64, 128$ diagrams randomly drawn from the $10,000$
diagrams. This is done for both dimensions zero and one.
We wanted to see that as the number of diagrams being averaged 
increases the Fr\'echet means
converged. To quantify this concentration we took
ten draws of $2, 4, 8, 16, 32, 64, 128$ diagrams from the $10,000$ diagrams 
and considered the distribution $\frac{1}{10}\sum_{i=1}^{10}\delta_{X_i}$ where $X_i$ where the Fr\'echet means of each of the sets of samples. We then computed the variance of these distributions as documented in Table \ref{table:variance}. 
\begin{table}[ht]
\caption{Variance of the sample Fr\'echet Means}
\centering
\begin{tabular}{c c c c}
\hline\hline
Number of samples & $H_0$ & $H_1$  \\ [0.5ex] 
\hline
2& 0.8353 & 0.9058 \\
4& 0.6295 & 0.6741 \\
8& 0.4429 & 0.5608  \\
16 & 0.4356 & 0.4618 \\
32 & 0.3165 & 0.3742 \\ 
64 & 0.3362 & 0.2965\\
128 & 0.3127 & 0.2233\\[1ex]
\hline
\end{tabular}
\label{table:variance}
\end{table}

\section{Discussion}
In this paper we introduce an algorithm for computing estimates of Fr\'echet means of a set of persistence diagrams.
We demonstrate local convergence of this algorithm and provide a law of large numbers for the Fr\'echet mean
computed on this set when the underlying measure has the form $\rho = m^{-1} \sum_{i=1}^m \delta_{X_i}$,
where $X_i$ are persistence diagrams. We believe that generically there is a unique global minimum to the Fr\'echet function and hence a unique Fr\'echet mean but this needs to be shown.

The work in this paper is a first step and several obvious extensions 
are needed. A law of large numbers result when the underlying measure 
is not restricted to a combination of Dirac functions is obviously important.
The results in our paper are strongly dependent on the 
$L^2$-Wasserstein metric; generalizing these results
to the Wasserstein metrics used in computational topology is of
central interest. The proofs and problem formulation
in this paper are very constructive -- the proofs and algorithms are 
developed for the specific examples and constructions
we propose and are not meant to generalize to other metrics or 
variants on the algorithm. It would be of great interest to
provide a presentation of the core ideas in the algorithm and theory
we developed in a more general framework using properties of abstract 
metric spaces and probability theory on these spaces.

\section*{Acknowledgments}
SM and KT would like to acknowledge Shmuel Weinberger for discussions and
insight. SM and KT would like to acknowledge E. Subag with help in obtaining
persistence diagrams computed from random Gaussian fields and explaining the
generative model. JH and YM are pleased to acknowledge the support from grants
DTRA: HDTRA1-08-BRCWMD, DARPA: D12AP00001On, AFOSR: FA9550-10-1-0436,
and NIH (Systems Biology): 5P50-GM081883. SM is pleased to acknowledge
support from grants NIH (Systems Biology): 5P50-GM081883, AFOSR:
FA9550-10-1-0436, and NSF CCF-1049290.

\appendix
\section{}

In order to prove Proposition \ref{prop:infismin} we need to give some 
conditions for a subset of $\D_{L^2}$ to be relatively compact. We
will use Theorem 21 in \cite{MilMukHar:2012} which requires a few 
definitions.

\begin{definition}[Birth-death bounded]
A set $S \subset \D_{L^2}$ is called \emph{birth-death bounded}, if there is a
constant $C>0$ such that for all $Z \in S$ and  for all $\Delta\neq x\in Z$
$\max\{|\bb(x)|, |\dd(x)|\}\leq C$, where $\bb(x)$ and $\dd(x)$
are the births and deaths respectively. 
\end{definition}

For $\alpha >0 $ and diagram $Z \in \D_{L^2}$ we define the maps
\begin{eqnarray*}
u_\alpha: {\D_{L^2}} \rightarrow {\D_{L^2}} & \mbox{ such that } & \Delta\neq x \in
u_\alpha(Z) \Longleftrightarrow x \in  Z \, \& \, \mbox{pers}(x) \geq
\alpha \\
l_\alpha: {\D_{L^2}} \rightarrow {\D_{L^2}} & \mbox{ such that } & \Delta \neq  x \in
l_\alpha(Z) \Longleftrightarrow x \in  Z \, \& \,  \mbox{pers}(x) <
\alpha,
\end{eqnarray*}
where $u_\alpha(Z)$ is the $\alpha$-upper part of $Z$ (the points in
$Z$ with persistence
at least $\alpha$) and $l_\alpha(Z)$ is the $\alpha$-lower part of $Z$
(the points in $Z$
with persistence less than $\alpha$). 

\begin{definition}[Off-diagonally birth-death bounded]
A set $S\subset \D_{L^2}$ is called \emph{off-diagonally birth-death bounded} if for all $\epsilon > 0$, $u_\epsilon(S)$ is birth-death bounded.
\end{definition}

\begin{definition}[Uniform]
A set $S\subset \D_{L^2}$ is called \emph{uniform} if for all $\epsilon >0$ there exists $\alpha >0$ such
that $d(l_\alpha (Z), \Delta)\leq \epsilon$ for all $Z \in S$.
\end{definition}

Theorem 21 in \cite{MilMukHar:2012} states that a subset of $\D_{W_p}$
is relatively compact if and only if it is bounded, off-diagonally
birth-death bounded and uniform. This also holds for $\D_{L^2}$
due to the equivalence in norms stated in \eqref{normeq}. 
We finally are ready to prove  Proposition \ref{prop:infismin}. 

\begin{proof}[Proof of Proposition \ref{prop:infismin}]
Fix two diagrams $X$ and $Y$. Let $\Phi$ be the set of bijections $\phi$ between points in $X$ and points in $Y$ with the further condition that 
$$\|x - \phi(x)\|^2\leq \|x - \Delta\|^2 + \|\phi(x) - \Delta\|^2$$ 
for all $x\in X$. Recall that by $\|x -\Delta\|$ we mean the perpendicular distance from $x$ to the diagonal which can thought of as pairing $x$ with the closest point to $x$ on the diagonal. By the above condition we are requiring that we never pair an off diagonal point $x\in X$ with an off diagonal point in $Y$ when pairing both with the diagonal would be more efficient.

By considering only the bijections in $\Phi$ we are only removing bijections $\tilde{\phi}$ for which there exists some $\phi \in \Phi$ such that $\sum_{x\in X} \|x-\phi(x)\|^2 <\sum_{x\in X}\|x-\tilde{\phi}(x)\|^2$. This means that \eqref{eq:defdistance} is equal to $\inf \{\sum_{x\in X} \|x-\phi(x)\|^2:\phi \in \Phi\}$. We will show this infimum is a minimum.

For each bijection $\phi\in \Phi$ we can construct a path $\gamma_\phi:[0,1] \to \D_{L^2}$ by setting $\gamma_\phi(t)$ to be the diagram with points $\{(1-t)x_i+t\phi(x_i)|x_i\in X\}$. Let $S = \{\gamma_\phi (t) : t\in [0,1], \phi\in \Phi\}$ which contains all the images of the paths $\gamma_\phi$. We want to show that $S$ is relatively compact. To do this we will show that $S$ is bounded, off-diagonally birth-death bounded and uniform which are sufficient conditions for relative compactness by Theorem 21 in \cite{MilMukHar:2012}.

Firstly observe that for any bijection $\phi$ and any $t\in [0,1]$ we
know 
$$d(\gamma_\phi(t),\Delta)^2 \leq d(X, \Delta)^2 + d(Y,
\Delta)^2$$ 
which is finite and independent of $\phi$ and $t$. 
This implies that the set $S$ is bounded.

We now wish to show that $S$ is off-diagonally bounded. For each $\epsilon>0$ there can only be finitely many points in $X$ and $Y$ whose distance from the diagonal is at least $\epsilon$. This implies that there is some $\tilde{C}_\epsilon$ such that all $x\in u_\epsilon(X)$ and $x\in u_\epsilon(Y)$ satisfy $\max\{|\bb(x)|, |\dd(x)|\}<\tilde{C}_\epsilon$. Let $M:=\max\{d(x, \Delta): x\in X \text{ or } x\in Y\}$. We will show that if $p \in u_\epsilon(Z)$ for some $Z\in S$ then $\max\{ |\bb(p)|, |\dd(p)|\}<\tilde{C}_\epsilon + \sqrt{2}M$.

Consider $p\in Z$ for some $Z\in S$.  This means $p\in \gamma_\phi(t)$ with $\phi\in \Phi$ and $t\in [0,1]$ and hence $p=(1-t)x+t\phi(x)$ for some $x\in X$. We have
\begin{eqnarray*}
\bb(p)&\in [\min\{ \bb(x), \bb(\phi(x))\}, \max \{ \bb(x), \bb(\phi(x))\}]\\
\dd(p)&\in [\min\{ \dd(x), \dd(\phi(x))\}, \max \{ \dd(x), \dd(\phi(x))\}]\\
d(p,\Delta) &\in [\min\{ d(x, \Delta), d(\phi(x), \Delta)\}, \max \{  d(x, \Delta), d(\phi(x), \Delta)\}]\\
\end{eqnarray*}
In order for $d(p,\Delta)\geq \epsilon$ either $d(x,\Delta)\geq \epsilon$ or $d(\phi(x),\Delta)\geq \epsilon$ and hence $\min\{|\bb(x)|, |\bb(\phi(x)|\}<\tilde{C}_\epsilon$ and $\min\{|\dd(x)|, |\dd(\phi(x)|\}<\tilde{C}_\epsilon$.

%
%
%

The condition for $\phi$ to be in $\Phi$ is that $\|x - \Delta\|^2 + \|\phi(x)- \Delta\|^2 \geq \|x-\phi(x)\|^2$ and hence $\|x- \phi(x)\|\leq \sqrt{2}M$. Since $|\bb(x)-\bb(\phi(x))| \leq \|x-\phi(x)\|$ we can conclude that
$$\max\{|\bb(x)|, |\bb(\phi(x)|\} \leq \min\{|\bb(x)|, |\bb(\phi(x)|\}+\sqrt{2}M<\tilde{C}_\epsilon + \sqrt{2} M.$$
Similarly we get $\max\{|\dd(x)|, |\dd(\phi(x)|\} <\tilde{C}_\epsilon + \sqrt{2} M.$

We now will show that $S$ is uniform. Recall that $S$ is uniform if
for all $\epsilon>0$ there exists an $\alpha>0$ such that
$d(l_\alpha(Z),\Delta)<\epsilon$ for all $Z\in S$. For any diagram
$Z\in \D_{L^2}$  denote $M_k(Z)$ as the number of
points  in $Z$ whose distance to the diagonal is in $[2^{-k},
2^{-k+1} )$ for $k\geq1$ and let $M_0(Z)$ be the number points with
distance in $[1, \infty)$. Let $N_k(Z)$ denote the number of points in $Z$ whose distance from the diagonal is at least $2^{-k}$ (in other words the number of off diagonal points in $u_{2^{-k}}(Z)$).

Let $X\cup Y$ be the diagram whose off diagonal points are the union
of the off diagonal points in $X$ and $Y$. Consider the following sum
\begin{align*}
\sum_{j=0}^\infty N_j(X\cup Y) 2^{-2j}&=\sum_{j=0}^\infty \left(\sum_{k=0}^j M_k(X\cup Y)\right) 2^{-2j},\\
&=\sum_{j=0}^\infty M_j(X\cup Y) \left( \sum_{k=j}^\infty 2^{-2k}\right), \\
&= \frac{4}{3}\sum_{j=0}^\infty M_j(X \cup Y) 2^{-2j},\\
&\leq \frac{4}{3}d(X\cup Y, \Delta)^2 <\infty.
\end{align*}

Let $\epsilon>0$. Since $\sum_{j=0}^\infty N_j(X\cup Y) 2^{-2j}$ converges there is some $L$ such that
$$\sum_{j=L}^\infty  N_j(X\cup Y) 2^{-2j}<\epsilon/4.$$ 

Let $\phi \in \Phi$ be a bijection between $X$ and $Y$. Consider the path
$\gamma:[0,1]\to \D_{L^2}$ where $\gamma_\phi(t)$ is the diagram with points
$\{(1-t)x+t\phi(x): x\in X\}$. For the point $(1-t)x+t\phi(x)$ to lie a distance at least $2^{-k}$ from the
diagonal at least one of $x$ or $\phi(x)$ must lie at least $2^{-k}$ from the diagonal. This implies that $N_k(\gamma_\phi(t)) \leq N_k(X\cup Y)$ for all bijections $\phi$ and $t\in [0,1]$. In other words $N_k(Z)\leq N_k(X\cup Y)$ for all $Z\in S$.

Now for any $Z\in S$ we have
\begin{eqnarray*}
d(l_{2^{-L}}(Z), \Delta)^2 \leq \sum_{j=L}^\infty M_j(Z) 2^{-2j+2}\leq  4 \sum_{j=L}^\infty N_j(Z) 2^{-2j}\leq  4 \sum_{j=L}^\infty N_j(X\cup Y) 2^{-2j}<\epsilon.
\end{eqnarray*}
Since the choice of $\alpha=2^{-L}$ was made independently of $Z\in S$ we conclude that $S$ is uniform.

We now know that $\overline{S}$ (the closure of $S$) is compact. Every path $t\mapsto \gamma_\phi(t)$ is a $K_\phi$-Lipschitz map from $[0,1]$ into $\overline{S}$ with $K_\phi^2 = \sum_{x\in X}\|x-\phi(x)\|^2$. 


Set $K= d(X,Y) +1$ and let $A$ be the set of $K$-Lipschitz maps from $[0,1]$ into $\overline{S}$. Since $\overline{S}$ is compact, we know by the Arzela-Ascoli theorem that $A$ is compact. By the definition of the infimum, there exists a sequence of bijections $\{\phi_j\}$ such that $K_{\phi_j}<K$ for all $j$ and $K_{\phi_j}$ is a sequence converging to $K$. The corresponding sequence of paths $\{\gamma_j:=\gamma_{\phi_j}\}$ is a sequence of $K$-Lipschitz maps from $[0,1]$ to $\overline{S}$ and hence lie in the compact set $A$. This means there must be a convergent subsequence of paths $\{\gamma_{n_j}\}$ with some limit $\gamma$ which exists and lies in $A$ as $A$ is compact.

Since $\gamma_{n_j}(0)=X$ and $\gamma_{n_j}(1)=Y$ for all $j$ (as they are all paths from $X$ to $Y$) we know that $\gamma(0)=X$ and $\gamma(1)=Y$. From $d(\gamma_{n_j}(t),\gamma_{n_j}(s)) \leq K_{\phi_{n_j}} |s-t|$ for all $s,t\in [0,1]$ and all $j$  and the limit $K_{\phi_{n_j}} \to K$ as $j\to \infty$ we can infer
$$d(\gamma(t),\gamma(s)) \leq K|s-t|$$
for all $s,t\in [0,1]$. If we follow along the path $\gamma$ where each point $x \in X$ goes to in $Y$ we can construct a bijection $\phi$ from points in $X$ to points in $Y$. This bijection achieves the infimum in \eqref{eq:defdistance}.
\end{proof}

\bibliographystyle{plain}
\bibliography{refs}

\end {document}